\newtheorem{Thm}{Theorem}[section]
\newtheorem{Lem}{Lemma}
\newtheorem{Prop}{Proposition}
\newcommand{\R}{\mathbb{R}}
\newcommand{\wast}{\mbox{weak}^\ast}
\renewcommand{\epsilon}{\varepsilon}
\newcommand{\ydt}{dy(\tau)}
\newcommand{\ydtp}{dy^{+}(\tau)}
\renewcommand{\phi}{\varphi}
\newcommand{\M}{\mathcal{M}}
\def\ru[#1][#2]{{#1}^{#2}}
\def\rl[#1][#2]{{#1}_{#2}}
\newcommand{\dx}{\Delta x}
\newcommand{\dV}{\Delta V}
\title{
An interior point sequential quadratic programming-type method 
for log-determinant semi-infinite programs
\thanks{This work was supported by JSPS KAKENHI Grant Number [15K15943].
}
}
\author{Takayuki Okuno
\thanks{RIKEN, The Center for Advanced Intelligence Project (AIP), Nihonbashi 1-chome Mitsui Building, 15th floor,1-4-1 Nihonbashi, Chuo-ku, Tokyo 103-0027, Japan,
E-mail: takayuki.okuno.ks@riken.jp}\hspace{2em}
Masao Fukushima
\thanks{Nanzan University, Faculty of Science and Engineering, 18 Yamazato-cho, Showa-ku, Nagoya 466-8673, Japan,
E-mail: {fuku@nanzan-u.ac.jp}}
}
\begin{document}
\maketitle
\begin{abstract}
In this paper, we consider a nonlinear semi-infinite program that minimizes a 
function including a log-determinant (logdet) function over 
positive definite matrix constraints and infinitely many convex inequality constraints, called SIPLOG for short. 
The main purpose of the paper is to develop an algorithm for computing a Karush-Kuhn-Tucker (KKT) point
for the SIPLOG efficiently.
More specifically, we propose an interior point sequential quadratic programming-type method that 
inexactly solves a sequence of semi-infinite quadratic programs 
approximating the SIPLOG.
Furthermore, to generate a search direction in the dual matrix space associated with the semi-definite constraint, we solve 
{scaled Newton equations {that yield} the family of Monteiro-Zhang directions}.
We prove that 
the proposed method 
weakly* converges to a KKT point under some mild assumptions. Finally, we conduct some numerical experiments to demonstrate the efficiency of the proposed method.
\end{abstract}

{\rm \bf Keyword:}
semi-infinite program,
log-determinant,
{nonlinear semi-definite program},
sequential {quadratic} programming method,
exchange method




\section{Introduction}
In this paper, we consider the following semi-infinite 
program that minimizes 
a nonlinear function including a log-determinant (logdet) function over
an infinite number of convex inequality constraints and a positive-semidefinite constraint, called SIPLOG for short:
{\begin{align}
 \begin{array}{ll}
  \displaystyle{\mathop{\rm Minimize}}  &f(x)-\mu\log\det F(x)
  \vspace{0.5em}\\
  {\rm subject~to} &g(x,\tau)\le 0\ \mbox{ for all } {\tau}\in T, \\
                   &F(x)\in S^m_{++},\\
                   &Gx=h,   
 \end{array}\label{lsisdp}
\end{align}
where $\mu\in \R$
is a positive constant,
$f:\R^n\to \R$ is a continuously differentiable function, and $T$ is a compact metric space.
In addition, $g:\R^n\times T\to \R$ is a continuous function with $g(\cdot,\tau)$ being convex and continuously differentiable. 
Moreover, $S^m$ and $S^m_{++} (S^m_{+})$ denote the sets of 
$m\times m$ symmetric matrices 
and symmetric positive (semi-)definite matrices, respectively, and 
$F(\cdot):\R^n\to S^{m}$ is an affine function, i.e.,
$$
F(x):=F_0+\sum_{i=1}^nx_iF_i
$$
with $F_i\in S^m$ for $i=0,1,\ldots,n$ and $x=(x_1,x_2,\ldots,x_n)^{\top}$.
Finally, $G\in \R^{s\times n}$ and $h\in \R^s$.

{Throughout the paper, 
we assume that SIPLOG\,\eqref{lsisdp} has a Slater point, i.e., a vector $\bar{x}\in \R^n$ such that
$$
F(\bar{x})\in S^m_{++},\ g(\bar{x},\tau)<0\ (\tau\in T),\ 
G\bar{x}=h.
$$
This assumption implies that the set of feasible points taking finite objective values is not empty.

When $\mu{\searrow} 0$, the SIPLOG reduces to a semi-infinite semi-definite program (SISDP):
\begin{equation}
{\rm min}\ f(x)\ \ \mbox{s.t. }\ g(x,\tau)\le 0\ \mbox{ for all } {\tau}\in T,\ {F}(x)\in S^m_{+},\ Gx=h.\label{lsisdp2}
\end{equation}
FIR filter design problem\,\cite{spwu1996} and
robust envelop-constrained filter design with orthonormal bases\,\cite{li2007robust} can be formulated as an SISDP whose functions are all affine with respect to $x$. 
For solving linear SISDPs, a discretization-type method and relaxed cutting plane method were proposed
in \cite{li2004solution} and \cite{li2006relaxed}, respectively.
In the absence of the semi-definite constraint and the logdet function, the SIPLOG becomes a nonlinear semi-infinite program (NSIP) with an infinite number of convex inequality constraints.  For an overview of the NSIP, see \cite{sip2,sip-recent,Reem} and references therein. 
On the other hand, in the absence of the semi-infinite constraints, the SIPLOG becomes a nonlinear semi-definite program (SDP).  
We can find 
some important applications for the nonlinear SDP in finance\,\cite{leibfritz2009successive,konno2003cutting}
and optimal control\,\cite{freund2007nonlinear,leibfritz2002interior}.
For solving the nonlinear SDP,  several existing algorithms for nonlinear programs such as a primal dual interior point method and a sequential quadratic programming (SQP) method were extended\,\cite{freund2007nonlinear,yabe,yamashita2012local}. See the survey article\,\cite{yamashita2015survey} for other algorithms of the nonlinear SDP. 

The logdet function plays a crucial role in 
various fields such as statistics, experimental design, and information and communication theory.
In continuous optimization, it has a close connection with the interior point method for the SDP\,\cite{wolkowicz2012handbook}.
Accordingly, many algorithms 
for solving optimization problems including the logdet function
have been studied extensively so far. For example, see \cite{vandenberghe1998determinant,yang2013proximal,wang2016solve}.

It makes sense to study the SIPLOG itself.  
Indeed, the D-optimal experimental design problem can be formulated as an SIPLOG straightforwardly\,\cite{vandenberghe1998connections}.  
Moreover, 
in the spirit of the primal-dual interior point method for the nonlinear SDP\,\cite{yabe}, 
we can expect that a sequence of Karush-Kuhn-Tucker (KKT) points for the SIPLOG
with $\mu>0$ decreasing to 0 converges to a KKT point of the SISDP\,\eqref{lsisdp2}. 
Hence, development of algorithms for solving the SIPLOG can be connected to efficient interior point methods for the SISDP.

In this paper, we focus on computing a KKT point of the SIPLOG\,\eqref{lsisdp}.
More specifically, we propose a new interior point SQP-type algorithm combined with an exchange method\,\cite{
Reem, Lai, okuno2012regularized,okuno2016exchange}.
In the method, we inexactly solve a semi-infinite (convex) quadratic program approximating the SIPLOG\,\eqref{lsisdp} to compute a search direction in the primal space.
Furthermore, to compute a search direction in the dual matrix space associated with the semi-definite constraint $F(x)\in S^m_+$, 
we solve certain 
scaled Newton equations that yield
the family of Monteiro-Zhang directions\,\cite[Chapter~10]{wolkowicz2012handbook}.
The proposed method can be regarded as an extension of the primal-dual interior point method \cite{yabe} for computing a barrier KKT point for the nonlinear SDP. However, the extension is not straightforward due to the presence of semi-infinite constraints.} 

The paper is organized as follows: 
In Section~\ref{sec:2}, we describe the KKT conditions for the SIPLOG.
In Section~\ref{sec:3}, we
propose an interior point SQP-type method for finding a KKT point and establish its convergence. 
In Section~\ref{sec:4}, we conduct some numerical experiments to demonstrate the efficiency of the proposed method.
Finally, we conclude the paper with some remarks.

\subsection*{Notations}
The identity matrix of order $m$ is denoted by $I$.
For any $P\in \R^{m\times m}$, ${\rm Tr}(P)$ denotes the trace of $P$.
For any symmetric matrices $X,Y\in S^m$, we define
the Jordan product of $X$ and $Y$ by $X\circ Y:=(XY+YX)/2$ and the inner product of $X$ and $Y$ by 
$X\bullet Y={\rm Tr}(XY)$. 
Also, we denote the Frobenius norm of $X\in S^m$ by $\|X\|_F:=\sqrt{X\bullet X}$.
We define the linear operator $\mathcal{L}_X:S^m\to S^m$ by $\mathcal{L}_X(Z):=X\circ Z$ for any $X\in S^m$.
We also denote $({\zeta})_+:=\max({\zeta},0)$ for any $\zeta\in \R$.
For sequences $\{y^k\}$ and $\{z^k\}$ of vectors, if $\|y^k\|\le M\|z^k\|$
for any $k$ with some $M>0$,
we write $\|y^k\|=O(\|z^k\|)$. 
Moreover, if there exists a positive sequence $\{\alpha_k\}$ with $\lim_{k\to\infty}\alpha_k=0$ and $\|y^k\|\le \alpha_k\|z^k\|$ {for any $k$},
we write $\|y^k\|=o(\|z^k\|)$. 
{For matrices $X_1,X_2,\ldots,X_p\in S^m$ and $Y\in S^m$,
we denote
$
(X_{i}\bullet Y)_{i=1}^n
:=
(X_1\bullet Y,
X_2\bullet Y,
\ldots,
X_{n}\bullet Y)^{\top}.
$}

{Let $\mathcal{C}(T)$ be the set of real-valued continuous functions defined on $T$ endowed with the supremum norm $\|h\|:=\max_{\tau\in T}|h(\tau)|$. 
Let $\mathcal{M}(T)$ be the dual space of $\mathcal{C}(T)$ 
that can be identified with the space of (finite signed) regular Borel measures with the Borel sigma algebra $\mathcal{B}$ on $T$ equipped with the total variation norm, i.e.,
$\|y\|:=\sup_{A\in \mathcal{B}}y(A)-\inf_{A\in \mathcal{B}}y(A)$ for $y\in \mathcal{M}(T)$, 
and denote by $\mathcal{M}_+(T)$ the set of all the nonnegative Borel measures of $\mathcal{M}(T)$. }
{\section{KKT conditions for the SIPLOG}\label{sec:2}
In this section, we present the Karush-Kuhn-Tucker (KKT) conditions for the SIPLOG\,\eqref{lsisdp}.
We say that the KKT conditions for SIPLOG\,\eqref{lsisdp} hold at $x^{\ast}\in \R^n$ if 
there exists some finite Borel-measure $y\in \M(T)$ such that 
\begin{align*}
&\nabla f(x^{\ast})+\int_T\nabla_xg(x^{\ast},\tau)\ydt-
(F_{i}\bullet \mu F(x^{\ast})^{-1})_{i=1}^n
+G^{\top}z=0,\notag \\ 
&\int_Tg(x^{\ast},\tau)\ydt=0,\ g(x^{\ast},\tau)\le 0\ (\tau\in T),\ y\in\M_+(T),\\
&Gx^{\ast}=h, \notag 
\end{align*}
where 
$z\in \R^s$ is a Lagrange multiplier vector {associated with} the equality constraints $Gx=h$. 
If $x^{\ast}$ is a local optimum of the SIPLOG, under Slater's constraint qualification, the KKT conditions hold at $x^{\ast}$.
In particular, there exists some discrete measure $y\in \M_+(T)$ satisfying the KKT conditions and $|{\rm supp}(y)|\le n$, where ${\rm supp}(y):=\{\tau\in T\mid y(\{\tau\})\neq 0\}$.
Conversely, when $f$ is convex, if the KKT conditions hold at $x^{\ast}$, then $x^{\ast}$ is an optimum of SIPLOG\,\eqref{lsisdp}.

Let $V\in S^m$.
Since $F(x^{\ast})\in S^m_{++}$,  
$\mu F(x^{\ast})^{-1} = V$ if and only if $F(x^{\ast})\circ V=\mu I$ and $V\in S^m_{++}$.
Then, using the matrix $V$ as a slack matrix variable, we can rewrite the KKT conditions as 
\begin{align}
&\nabla f(x^{\ast})+\int_T\nabla_xg(x^{\ast},\tau)\ydt-(F_i\bullet V)_{i=1}^n+G^{\top}z=0,\label{e1}\\ 
  &{F}(x^{\ast})\circ V=\mu I,\ F(x^{\ast})\in S^m_{++},\ V\in S^m_{++},\label{e2}\\
&\int_Tg(x^{\ast},\tau)\ydt=0,\ g(x^{\ast},\tau)\le 0\ (\tau\in T),\ y\in\M_+(T),\label{e3}\\
&Gx^{\ast}=h.\label{e4} 
\end{align}
Hereafter, we call a quadruple $(x,y,z,V)\in \R^n\times \M(T)\times \R^s\times S^m$ satisfying the conditions \eqref{e1}--\eqref{e4} a KKT point of the SIPLOG\,\eqref{lsisdp}. 

Note that the conditions\,\eqref{e1}, \eqref{e2}, and \eqref{e4} can be cast as the perturbed KKT conditions for the nonlinear SDP which is obtained by removing the semi-infinite constraint $g(x,\tau)\le 0\ (\tau\in T)$ from SISDP\,\eqref{lsisdp2}. 
Yamashita et al.\,\cite{yabe} proposed a primal-dual interior point method to find a solution satisfying those perturbed (barrier) KKT conditions for the nonliear SDP\footnote{
{Yamashita et al.\,\cite{yabe} considered the nonlinear SDP of the form:
$
\min\ f(x)\ \mbox{s.t. }\hat{h}(x)=0,\ \hat{G}(x)\in S^m_{++},
$
where the functions $\hat{h}:\R^n\to \R^s$ and $\hat{G}:\R^n\to S^{m}$ are continuously differentiable.} }.

In the next section, we will propose an interior point SQP-type algorithm for computing a KKT point $(x,y,z,V)$ satisfying the conditions\,\eqref{e1}--\eqref{e4}. 
This algorithm can be regarded as an extension of the algorithm proposed by Yamashita et al.\,\cite{yabe}.
Nevertheless, the way of extension is not straightforward because we must handle the semi-infinite constraint efficiently.}
\section{Interior point SQP-type algorithm for finding a KKT point}\label{sec:3}
In this section, we give an interior point SQP-type method for getting a KKT point of SIPLOG\,\eqref{lsisdp}.
Throughout the section, we use the following notations:
\begin{align*}
y_i^r&:=y(\tau_i^r)\ \ \mbox{for }{\rm supp}(y^r)=\{\tau_1^r,\tau_2^r,\ldots,\tau_{p_r}^r\}\\
w^r &:= (x^r,y^r,z^r,V_r)\in \R^n\times \mathcal{M}(T)\times \R^s\times S^m.
\end{align*}
The proposed algorithm composes iteration points $\{w^r\}_{r\ge 0}$ sequentially
by 
$$
\left(x^{r+1},
V_{r+1}
\right)
= \left(x^r +s_r\Delta x^r, V_{r}+s_r\Delta V_r\right),
$$
where 
$\left(\Delta x^r,\Delta V_r\right)\in \R^n\times S^m$ denotes a search direction and 
$s_r>0$ is a step size chosen so that the interior point constraints
\begin{equation*}
F(x^{r+1})\in S^m_{++}\ \mbox{and }\ V_{r+1}\in S^m_{++}
\end{equation*}
are satisfied.
In addition, we produce a sequence $\{y^r\}\subseteq \mathcal{M}_+(T)$ with $\left|{\rm supp}(y^r)\right|<\infty$ for any $r\ge 0$. 
Hereafter, we often drop super- or sub-scripts from those symbols for simplicity of expression.
\subsection{Search direction $(\Delta x,\Delta V)$ and Lagrange multipliers $(y^+, z^+)$}
In what follows, we explain how to generate a search direction $\left(\Delta x,\Delta V\right)$
together with Lagrange multiplier measure $y^+\in \M_+(T)$ and vector $z^+\in \R^s$
at the current point
$w=(x,y,z,V)$.

In applying an SQP-like method to \eqref{lsisdp}, it is natural to think of the following semi-infinite quadratic program, called SIQP for short, with infinitely many {\it linear} constraints:
\begin{align}
 \begin{array}{ll}
  \displaystyle{\mathop{\rm Minimize}_{\Delta x}}  &\nabla f(x)^{\top}\Delta x +\frac{1}{2}\Delta x^{\top}B\Delta x-\mu \xi(x)^{\top}\Delta x 
  \vspace{0.5em}\\
  {\rm subject~to} &g(x,\tau)+\nabla_xg(x,\tau)^{\top}\Delta x\le 0\hspace{1.0em}(\tau\in T),\\
                   &G(x+\Delta x)=h, 
 \end{array}\label{SIQP}
\end{align}
where the coefficient matrix $B\in S^n$ is chosen to be positive definite and 
the function $\xi:\R^n\to \R^n$ is defined by 
\begin{equation}
\xi(x):=\nabla\log\det F(x)=(F_i\bullet F(x)^{-1})_{i=1}^n.\label{eq:wx}
\end{equation}
Solving the above problem is still difficult since it contains the semi-infinite constraints
$
g(x,\tau)+\nabla_xg(x,\tau)^{\top}\Delta x\le 0\ (\tau\in T).
$
To relax the difficulty, we propose to make use of its inexact solution 
$\Delta x\in \R^n$ together with Lagrange multiplier measure $y^+\in \mathcal{M}_+(T)$ satisfying $\left|{\rm supp}(y^+)\right|<\infty$ and vector $z^+\in \R^s$ 
such that 
\begin{align}\label{al:opt}
&\nabla f(x)+B\Delta x-\mu \xi(x)+\int_T\nabla_xg(x,\tau)\ydtp+G^{\top}z^+=0,\notag \\
&g(x,\tau)+\nabla_xg(x,\tau)^{\top}\Delta x\le 0\ \ (\tau\in {\rm supp}(y^+)),\\
&\int_T\left(g(x,\tau)+\nabla_xg(x,\tau)^{\top}\Delta x\right)\ydtp=0,\ G(x+\Delta x)=h,\notag \\
&\max_{\tau\in T}\left(
g(x,\tau)+\nabla_xg(x,\tau)^{\top}\Delta x
\right)_+\le \gamma,\notag
\end{align}
where $\gamma>0$ is a relaxation parameter controlled in the algorithm.
Thanks to $|{\rm supp}(y^+)|<\infty$, the above integral forms can be replaced with simple finite summations:
\begin{align*}
&\int_T\nabla_xg(x,\tau)\ydtp=\sum_{j=1}^p\nabla_xg(x,\tau_j)y^+({\tau_j}),\\
&\int_T\left(g(x,\tau)+\nabla_xg(x,\tau)^{\top}\Delta x\right)\ydtp=
\sum_{j=1}^p\left(g(x,\tau_j)+\nabla_xg(x,\tau_j)^{\top}\Delta x\right)y^+({\tau_j})
\end{align*}
with $p:=|{\rm supp}(y^+)|$ and ${\rm supp}(y^+)=\{\tau_1,\tau_2,\ldots,\tau_p\}$.

Notice here that, if $\gamma=0$, then 
the conditions\,\eqref{al:opt} are noting but the KKT conditions for SIQP\,\eqref{SIQP}. 
We should further remark that exchange-type methods \cite{Reem,Lai, okuno2012regularized,okuno2016exchange} work effectively in finding vectors satisfying those conditions. 
Below, an exchange method for finding $\left(\Delta x,y^+,z^+\right)$ satisfying the conditions \eqref{al:opt} is described:
\begin{center}
{\underline{Exchange method}}
\end{center}
\begin{description}
\item{Step~0:} Choose the initial {index} set $T_0\subsetneq T$ such that $|T_0|<\infty$. Set $k:=0$.
\item{Step~1:} Solve SIQP\,\eqref{SIQP} with $T$ replaced by $T_k$ to obtain an optimum $\Delta x^k$ and Lagrange multipliers $\zeta_{\tau}\ge 0\ (\tau\in T_k)$ corresponding to the inequality constraints. 
\item{Step~2:} Set $\tilde{T}_{k}:=\{\tau\in T_k\mid \zeta_{\tau}>0\}$.
\item{Step~3:} Find an index $\tau\in T$ such that $g(x,\tau)+\nabla g(x,\tau)^{\top}\Delta x>\gamma$ and let $T_{k+1}:=\tilde{T}_k\cup \{\tau\}$.
If such an index $\tau\in T$ does not exist, i.e., $\max_{\tau\in T}\left(g(x,\tau)+\nabla g(x,\tau)^{\top}\Delta x\right)\le \gamma$ holds, stop the algorithm. Otherwise, go to Step~4.
\item{Step~4:} Set $k:=k+1$ and return to Step~1.
\end{description}
In Step~2, we drop indices corresponding to the inequality constraints with zero Lagrange multipliers, which contain inactive constraints. 
Particularly,
it can be proved in a manner similar to \cite[Theorem~3.2]{okuno2012regularized}
that 
under the positive-definiteness of the matrix $B$, 
the above exchange method stops in finitely many iterations.

We next consider how we derive $\Delta V$ by means of scaling techniques. 
Choose a nonsingular matrix $P\in \R^{m\times m}$ arbitrarily and scale $F(x)$ and $V$ by 
\begin{align}
&\tilde{F}(x):=PF(x)P^{\top},\ \tilde{V}:=P^{-\top}VP^{-1}.\label{scal}                                                                                   
\end{align}
Note that the barrier shifted complementarity conditions
$F(x)\circ V=\mu I,\ F(x)\in S^m_+,\ V\in S^m_+$ are equivalent to the scaled ones  
$\tilde{F}(x)\circ \tilde{V}=\mu I,\ \tilde{F}(x)\in S^m_+,\ \tilde{V}\in S^m_+$.
Then, the Newton equations for $\tilde{F}(x)\circ \tilde{V}=\mu I$ are written as
\begin{equation}
\left(\tilde{F}(x)+\sum_{i=1}^n\Delta x_i\tilde{F}_i\right)\circ\tilde{V}+
\tilde{F}(x)\circ\Delta \tilde{V}=\mu I. \label{eq:scaled_new}
\end{equation}
Here, $\tilde{F}_i:=PF_iP^{\top}$ for $i=1,2,\ldots,n$.
In terms of the linear operator $\mathcal{L}_{\tilde{F}(x)}:S^m\to S^m$,
\eqref{eq:scaled_new}
is rephrased as 
\begin{equation}
\sum_{i=1}^n\Delta x_i{\tilde{F}_i}\circ\tilde{V}+
\mathcal{L}_{\tilde{F}(x)}\Delta \tilde{V}=\mu I-\mathcal{L}_{\tilde{F}(x)}\tilde{V}.\label{eq:scaled_new_liap}
\end{equation}
Under the condition that ${F}(x)\in S^m_{++}$, 
$\mathcal{L}_{\tilde{F}(x)}$ is invertible, and hence \eqref{eq:scaled_new_liap} is uniquely solvable for $\Delta\tilde{V}$. Actually, we have 
\begin{equation}
\Delta \tilde{V}=\mu\tilde{F}(x)^{-1}-\sum_{i=1}^n\Delta x_i\mathcal{L}_{\tilde{F}(x)}^{-1}\mathcal{L}_{\tilde{V}}\tilde{F}_i-\tilde{V}.\label{eq:0326}
\end{equation}
Now, we propose to derive $\Delta V$ by the inverse-scaling of $\Delta \tilde{V}$. Specifically, 
$\Delta V$ is computed {as}
\begin{equation}
\Delta V=P^{\top}\Delta \tilde{V}P=
\mu{F}(x)^{-1}-V-\sum_{i=1}^n\Delta x_iP^{\top}\mathcal{L}_{\tilde{F}(x)}^{-1}\mathcal{L}_{\tilde{V}}\tilde{F}_iP.
\label{eq:dv}
\end{equation} 
The direction $\Delta V$ obtained by \eqref{eq:dv} may be seen as a member of the family of Monteiro-Zhang (MZ) directions\,\cite[Chapter~10]{wolkowicz2012handbook}.
Depending on the choice of $P$, generated directions admit different properties.
In particular, the following selections of $P$ and the correspondingly obtained directions are significant in the context of the LSDPs and NSDPs.
\begin{description}
\item[AHO direction ($P=I$):]
$\Delta V=\mu{F}(x)^{-1}-V-\sum_{i=1}^n\Delta x_i\mathcal{L}_{{F}(x)}^{-1}({F}_i\circ{V}).$  
\item[HRVW/KSH/M direction ($P=F(x)^{-\frac{1}{2}}$ ):]
In this case, $\tilde{F}(x)=I$ and $\Delta V=\mu{F}(x)^{-1}-V-\left(F(x)^{-1}\left(\sum_{i=1}^n\Delta x_iF_i\right)V+V\left(\sum_{i=1}^n\Delta x_iF_i\right)F(x)^{-1}\right)/2$.
\item[NT direction ($P=W^{-\frac{1}{2}},\ W:=F(x)^{\frac{1}{2}}(F(x)^{\frac{1}{2}}VF(x)^{\frac{1}{2}})^{-\frac{1}{2}}F(x)^{\frac{1}{2}})$:]
In this case,  $\tilde{F}(x)=\tilde{V}$ and
$\Delta {V}=\mu{F}(x)^{-1}-V-W^{-1}\left(\sum_{i=1}^n\Delta x_i{F}_i\right)W^{-1}$.
\end{description}
As for the HRVW/KSH/M and NT directions, 
we should note that $\tilde{F}(x)$ and $\tilde{V}$ commute, namely, $\tilde{F}(x)\tilde{V}=\tilde{V}\tilde{F}(x)$.
Hereafter, we call the scaling matrices for making AHO, HRVW/KSH/M, and NT directions 
AHO, HRVW/KSH/M, and NT matrices, respectively. 
\subsection{Step size along $(\Delta x,\Delta V)$}
To find a step size $s\in (0,1]$ along the obtained search direction $\Delta W:=(\Delta x, \Delta V)$, we use an Armijo-like line search technique along with the merit function $\Phi_{\rho}:\R^n\times S^m\to \R$ defined {by} 
\begin{equation}
\Phi_{\rho}(x,V): = \chi_{\rho}(x)+\nu\psi(x,V),\label{merit}
\end{equation}
where $\nu>0$ is a positive parameter, $\rho>0$ is a penalty parameter, and  
\begin{align}
\chi_{\rho}(x)&:= f(x) - \mu\log\det F(x)+\rho\max_{\tau\in T}\left(g(x,\tau)\right)_+ +\rho\|Gx-h\|_{1}, \label{al:chi}\\
\psi(x,V)&:=F(x)\bullet V-\mu\log\det F(x)V.\notag 
\end{align}
The function $\Phi_{\rho}$ is a straightforward extension of the primal-dual barrier merit function for getting the BKKT point of the NSDP\,\cite{yabe}.
The function $\psi(\cdot,\cdot)$ can be regarded as a merit function for the barrier shifted complementarity condition
$
F(x)\circ V=\mu I, F(x)\in S^m_+,\ V\in S^m_+.
$
Actually, when $F(x)\in S^m_+$ and $V\in S^m_+$, it holds that 
\begin{equation}
\nabla \psi(x,V)=0
\Longleftrightarrow 
F(x)\circ V=\mu I. 
\end{equation}

A step size $s>0$ is then determined using the Armijo-like line search method, so that the value of $\Phi_{\rho}$ approximately decreases while keeping $F(x)\in S^m_{++}$ and $V\in S^m_{++}$:
More specifically, we first choose parameters $\alpha$, $\beta\in (0,1)$ arbitrarily.
Then, we find the smallest nonnegative integer $\ell$ such that
\begin{align*}
\Phi_{\rho}(x+\bar{s}\beta^{\ell}\dx,V+\bar{s}\beta^{\ell}\dV)&\\
&\hspace{-5em}\le \Phi_{\rho}(x,V)-\alpha \bar{s}\beta^{\ell}\left(\dx^{\top}B\dx+\nu {\psi}^{\prime}(x,V;\dx,\dV)\right)+\bar{s}\beta^{\ell}\rho\gamma,
\end{align*}
where 
$B\in S^n_{++}$ is the matrix prescribed in SIQP\,\eqref{SIQP}, 
$\psi^{\prime}(x,V;\dx,\dV)$ denotes the directional derivative of function $\psi$ at $(x,V)$ in the direction $(\Delta x,\Delta V)$ and it is explicitly represented as 
\begin{equation}\label{direct}
\psi^{\prime}(x,V;\Delta x,\Delta V)={\rm Tr}\left(\sum_{i=1}^n\Delta x_iF_iV+F(x)\Delta V-\mu F(x)^{-1}\sum_{i=1}^n\Delta x_iF_i-\mu V^{-1}\Delta V\right).
\end{equation}
Also, $\gamma>0$ is the constant prescribed in \eqref{al:opt}. 
Moreover, $\bar{s}\in (0,1]$ is the initial step size chosen so that
\begin{equation}
  F(x)+s\sum_{i=1}^n\Delta x_iF_i\in S^m_{++}\ \mbox{and }V+s\Delta V\in S^m_{++}\notag 
\end{equation}
hold for any $s\in [0,\bar{s}]$. 
For example, we set
\begin{equation}
\bar{s}=\min(s_x,s_V,1),\label{eq:s0}
\end{equation}
where
\begin{align*}
s_x&:=
\begin{cases}
\displaystyle{- \frac{\sigma}{\lambda_{\rm min}(F(x)^{-1}\sum_{i=1}^n\dx_iF_i)}} &\mbox{if }\lambda_{\rm min}(F(x)^{-1}\sum_{i=1}^n\dx_iF_i)<0\\
1                                                     &\mbox{otherwise}, 
\end{cases}\\
s_V&:=
\begin{cases}
\displaystyle{- \frac{\sigma}{\lambda_{\rm min}(V^{-1}\Delta V)}} &\mbox{if }\lambda_{\rm min}(V^{-1}\Delta V)<0\\
1                                                     &\mbox{otherwise} 
\end{cases}
\end{align*}
with $\sigma\in (0,1)$ a positive parameter.
Furthermore, to ensure that the generated direction $\left(\dx,\dV\right)$ is a decent direction for $\Phi_{\rho}$,
the value of the penalty parameter $\rho$ must be chosen sufficiently large.
Specifically, 
we set $\rho$ so that  
\begin{equation*}
\rho>\max\left(\|y\|,\|z\|_{\infty}\right).
\end{equation*}

Now, we describe the algorithm for getting BKKT points.\vspace{1.0em}
\begin{center}
\underline{Algorithm~1}\vspace{1.0em}
\end{center}
   \begin{description}
\item[Step~0 (Initialization):]
Set $(\ru[x][0],\ru[z][0],\rl[V][0])\in\R^n\times \R^s\times S^m_{++}$ and 
$\ru[y][0]\in \mathcal{M}_+(T)$ such that $|{\rm supp}(y^0)|<\infty$.
Choose parameters 
$
\alpha,\beta_1,\beta_2,\sigma\in (0,1)$, and $\delta, \nu, \rl[\rho][0], \rl[\gamma][0]>0.$ 
Set $r:=0$.
\item[Step~1 (Stopping condition):]
If 
{$(x^r,y^r,z^r,V_r)$
satisfies the KKT conditions\,\eqref{e1}--\eqref{e4}}, 
then
stop the algorithm. Otherwise, go to Step~2.
\item[Step~2 (Select $P_{r}$ and $B_{r}$):] 
Choose a nonsingular matrix $\rl[P][r]\in \R^{n\times n}$ and 
positive definite matrix $\rl[B][r]\in S^n$.
\item[Step~3 (Generate $(\Delta x^{r}$, $\Delta V_{r}, y^{r+1},z^{r+1})$):]
Find a search direction $\rl[\dx][r]$,
Lagrange multiplier {measure} $\ru[y][r+1]\in\mathcal{M}_+(T)$
{such that $|{\rm supp}(y^{r+1})|<\infty$}, 
and vector $\ru[z][r+1]\in\R^s$ satisfying \eqref{al:opt} with
$x=x^r$, $B=B_r$, and $\gamma=\gamma_r$.
Compute $\rl[\Delta V][r]$ from \eqref{eq:dv} with $x=x^r$.
\item[Step~4 (Update $\rho_r$):] If
\begin{equation*}
\rl[\rho][r]>\max\left(\|\ru[y][r+1]\|,\ \|\ru[z][r+1]\|_{\infty}\right),
\end{equation*}
set $\rl[\rho][r+1]:=\rl[\rho][r]$. Otherwise, set
\begin{equation}
\rl[\rho][r+1]:=\delta+
\max\left(\|\ru[y][r+1]\|,\ \|\ru[z][r+1]\|_{\infty}\right).
\end{equation}
\item[Step~5 (Armijo-like line search):]
Compute $\bar{s}$ by \eqref{eq:s0} and let $\rl[s][r]=\bar{s}\beta_1^{\ell}$ with
the smallest nonnegative integer $\ell\ge 0$ satisfying   
\begin{equation}\label{eq:armijo-rule}
\Phi_{\rl[\rho][r+1]}(\ru[x][r]+\rl[s][r]\ru[\dx][r],V_r+\rl[s][r]\rl[\dV][r])\le \Phi_{\rl[\rho][r+1]}(\ru[x][r],\rl[V][r])-\alpha \rl[s][r]\Delta\Phi_r\ +\rl[\rho][r+1]s_r\rl[\gamma][r],
\end{equation}
where $\Delta\Phi_r:=(\ru[\dx][r])^{\top}\rl[B][r]\ru[\dx][r]-\nu \psi^{\prime}(\ru[x][r],\rl[V][r];\ru[\dx][r],\rl[\dV][r])$.
\item[Step~6 (Update $x^{r}$, $V_{r}$ and $\gamma_{r}$):] Set
\begin{equation*}
(\ru[x][r+1],\rl[V][r+1]):=(\ru[x][r]+\rl[s][r]\ru[\dx][r],\rl[V][r]+\rl[s][r]\rl[\dV][r]),
\end{equation*}
$\gamma_{r+1}:=\beta_2\gamma_{r}$, and $r:=r+1$.
Return to Step~1. 
\end{description}
\subsection{Choice of the coefficient matrix $B_r$}\label{subsec:Br}
In this section, we consider a particular choice of the coefficient matrix $B_r$ in SIQP\,\eqref{SIQP}.
In the conventional SQP, one of the basic selections for $B_r$ is the Hessian of the Lagrangian for
the SIPLOG\,\eqref{lsisdp}, i.e., 
\begin{equation}
\nabla_{xx}^2L(x^r,y^r,z^r)=\nabla^2f(x^r)+\int_T\nabla^2_{xx}g(x^r,\tau)dy^r(\tau)-\mu\nabla^2\log\det {F(x^r),}\label{eq:hessian}
\end{equation}
where  
$L(x,y,z):=f(x)+\int_Tg(x,\tau)dy(\tau)+(Gx-h)^{\top}z-\mu\log\det F(x)$.
To explore other candidates for $B_r$,
we consider 
the matrix valued function $B:\R^n\times \mathcal{M}(T)\times S^m_{++}\to S^n_{++}$ of the following form: 
\begin{equation}
B(x,y,V):={M}(x,y)+{H}_{P}(x,V),\label{eq:0913}
\end{equation}
where ${M}: \R^n\times \mathcal{M}(T)\to S^n$ 
is some positive definite matrix valued function and ${H}_P:\R^n\times S^m_{++}\to S^n$ 
is defined by  
\begin{equation}
\left({H}_P(x,V)\right)_{i,j}:=\frac{\tilde{F}_i\bullet\left(\mathcal{L}_{\tilde{F}(x)}^{-1}\mathcal{L}_{\tilde{V}}
+\mathcal{L}_{\tilde{V}}\mathcal{L}_{\tilde{F}(x)}^{-1}
\right)\tilde{F}_j}{2}\label{eq:HP}
\end{equation}
for $i,j=1,2,\ldots,n$ and $(x,V)\in \R^n\times S^m$. Recall here that $\tilde{F}(x)$ and $\tilde{V}$ are positive definite matrices
obtained by scaling $F(x)$ and $V$ with the matrix $P$. See \eqref{scal}.
When $\tilde{F}(x)$ and $\tilde{V}$ commute,  
{so} do $\mathcal{L}^{-1}_{\tilde{F}(x)}$ and 
$\mathcal{L}_{\tilde{V}}$.
Then, 
by noting \eqref{eq:0326},
the KKT conditions\,\eqref{al:opt} 
of SIQP\,\eqref{SIQP} with $B=B(x,y,V)$ can be represented in terms of $\Delta\tilde{V}$ as  
\begin{align*}\label{al:opt2}
&M(x,y)\Delta x+\nabla f(x)-
                                                         (\tilde{F}_i\bullet (\tilde{V}+\Delta\tilde{V}))_{i=1}^n 
+\int_T\nabla_xg(x,\tau)d(y+\Delta y)(\tau)+G^{\top}(z+\Delta z)=0,\notag \\
&\tilde{F}(x)\circ \tilde{V}+\tilde{F}(x)\circ \Delta \tilde{V}
+\sum_{i=1}^n\Delta x_i\tilde{F}_i\circ\tilde{V} =\mu I,\notag\\
&g(x,\tau)+\nabla_xg(x,\tau)^{\top}\Delta x\le 0\ \ (\tau\in {\rm supp}(y+\Delta y)),\\
&\int_T\left(g(x,\tau)+\nabla_xg(x,\tau)^{\top}\Delta x\right)d(y+\Delta y)(\tau)=0,\ y+\Delta y\in\mathcal{M}_+(T),\ G(x+\Delta x)=h,\notag \\
&\max_{\tau\in T}\left(
g(x,\tau)+\nabla_xg(x,\tau)^{\top}\Delta x
\right)_+\le 0.\notag
\end{align*}
Actually, by substituting $\tilde{V}+\Delta\tilde{V}=\mathcal{L}_{\tilde{F}(x)}^{-1}(\mu I - \sum_{i=1}^n\Delta x_i\tilde{F}_i\circ\tilde{V})$, which is obtained from the above second condition, into the first condition, we can regain the KKT conditions\,\eqref{al:opt} with $B=B(x,y,V)$.
Furthermore, if the function $M(x,y)$ is the Hessian of the function 
\begin{equation}
L_2(x,y):=f(x)+\int_Tg(x,\tau)dy(\tau),\label{eq:0615-0916}
\end{equation}
namely, 
$\nabla^2_{xx}L_2(x,y) = \nabla^2f(x)+\int_T\nabla^2_{xx}g(x,\tau)dy(\tau)$, then solving the above system is regarded as the scaled Newton iteration for the KKT conditions.
From these observations, we can expect that employing such $B(x^r,y^r,V_r)$ as $B_r$ accomplishes a rapid convergence.
{For reference, we list the formulas of $({H}_P(x,V))_{ij}$
below for the case where the HRVW/KSH/M and NT matrices are selected as the scaling matrix $P$.
\begin{description}
\item[HRVW/KSH/M matrix:] $({H}_P(x,V))_{ij}={\rm Tr}\left(F(x)^{-1}F_jVF_i\right)$,
\item[NT matrix:] $({H}_P(x,V))_{ij}={\rm Tr}\left(W^{-1}F_iW^{-1}F_j\right)$ with $W=F(x)^{\frac{1}{2}}(F(x)^{\frac{1}{2}}VF(x)^{\frac{1}{2}})^{-\frac{1}{2}}F(x)^{\frac{1}{2}}$.
\end{description}}
\subsection{Convergence analysis of Algorithm~1}
In the subsequent analysis, 
we make the following assumptions:
\begin{description}
\item{\bf Assumption~A:}
\end{description}
\begin{enumerate}
\item\label{B2}
The scaling matrix $P_r$ is the HRVW/KSH/M or NT matrix for any $r=0,1,2,\ldots$.
\item\label{B3} The sequence of penalty parameters $\{\rho_{r}\}$ is bounded.
\item\label{B4} The generated sequence $\{x^{r}\}$ is bounded.
\item\label{B5} The initial point $\ru[x][0]$ is chosen to satisfy $G\ru[x][0]=h$. 
\end{enumerate}
Although similar assumptions are made in many existing works on interior point methods for nonlinear programs or NSDPs,
Assumption~A-\ref{B3} may seem rather strong{.}
To relax it is one of future subjects that should be settled.
As for Assumption~A-\ref{B4}, we can show its validity under a certain hypothesis; see Proposition~\ref{bound_x} below.
Assumption A-\ref{B5} is made for simplicity of expression in some subsequent proofs. 
The proofs can be extended straightforwardly to the general case {where} Assumption~A-\ref{B5} {is absent}. 
\begin{Prop}\label{bound_x}
Suppose that Assumption~A-\ref{B5} hold{s} and $g(x,\tau)$
is an affine function with respect to $x$, i.e., $g(x,\tau)=a(\tau)^{\top}x-b(\tau)$,
where $a:T\to\R^n$ and $b:T\to \R$ are continuous functions.
Furthermore, assume that the feasible set of the SISDP\,\eqref{lsisdp} is nonempty and compact.
Then, the generated sequence $\{\ru[x][r]\}$ is bounded.
\end{Prop}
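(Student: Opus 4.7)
The plan is to show that every iterate $x^r$ remains inside a fixed bounded set obtained by slightly inflating the SISDP feasible region along the semi-infinite constraints. The engine is a recession cone argument applied to this inflated set; the key intermediate step is a uniform bound on the semi-infinite constraint violation, which becomes available precisely because $g$ is affine in $x$ and hence linearization in the SIQP \eqref{SIQP} is exact.

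First I would verify that the equality constraint is preserved along iterations. By Assumption~A-\ref{B5}, $Gx^0 = h$; inductively, the requirement $G(x^r+\Delta x^r) = h$ in \eqref{al:opt} combined with $Gx^r = h$ gives $G\Delta x^r = 0$, hence $Gx^{r+1} = Gx^r + s_r G\Delta x^r = h$. Next I would control the constraint violation $e_r := \max_{\tau\in T}(g(x^r,\tau))_+$. Affinity of $g$ yields the exact identity
\[
g(x^{r+1},\tau) = (1 - s_r)\,g(x^r,\tau) + s_r\bigl(g(x^r,\tau) + \nabla_x g(x^r,\tau)^\top \Delta x^r\bigr).
\]
Applying $(\cdot)_+$ (convex and monotone), maximizing over $\tau$, and invoking the last line of \eqref{al:opt} at iteration $r$ gives
\[
e_{r+1} \le (1 - s_r)\, e_r + s_r \gamma_r \le e_r + \gamma_r.
\]
Since $\gamma_r = \beta_2^{\,r}\gamma_0$ is summable, telescoping produces a uniform bound $e_r \le M := e_0 + \gamma_0/(1-\beta_2)$ for every $r$.

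Finally I would conclude by a recession cone argument. Define
\[
\mathcal{F}_M := \{\, x \in \R^n : Gx = h,\; F(x) \in S^m_+,\; g(x,\tau) \le M \text{ for all } \tau \in T\,\}.
\]
Because $g$ is affine in $x$ and the remaining defining constraints are identical to those of the SISDP feasible set $\mathcal{F}_0$, the recession cones of $\mathcal{F}_M$ and $\mathcal{F}_0$ coincide. Compactness of $\mathcal{F}_0$ forces its recession cone to be $\{0\}$; since $\mathcal{F}_M$ is closed, convex, and nonempty (it contains $\mathcal{F}_0$), triviality of the recession cone implies boundedness. Every iterate satisfies $Gx^r = h$, $F(x^r)\in S^m_{++}\subset S^m_+$, and $g(x^r,\tau)\le e_r \le M$, so $\{x^r\}\subset \mathcal{F}_M$ and the claim follows.

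The principal subtlety lies in the constraint-violation recursion: the cancellation of the Taylor remainder hinges critically on affinity of $g$ in $x$, without which $\gamma_r\to 0$ alone would not suffice to yield a uniform bound on $e_r$. Everything else is a standard recession cone characterization of boundedness for closed convex sets.
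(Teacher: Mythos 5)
Your proof is correct and follows essentially the same route as the paper's: an induction on the iterates, exploiting the exactness of the affine linearization and the relaxed feasibility bound $\gamma_r$ in \eqref{al:opt}, yields a uniform bound on $\max_{\tau\in T}\left(g(x^r,\tau)\right)_+$, after which boundedness is deduced from compactness of the SISDP feasible set by a convexity argument. The only cosmetic differences are that the paper bounds the violation by $\max(e_0,\gamma_0)$ rather than by the telescoped sum $e_0+\gamma_0/(1-\beta_2)$, and phrases the final step as ``all level sets of the proper closed convex function $\pi$ are compact if one is'' --- which is exactly the recession-cone fact you invoke directly.
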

\begin{proof}
We can show that for each $r\ge 0$
\begin{equation*}
F(x^{r})\in S^m_{++},\ a(\tau)^{\top}x^{r}-b(\tau)\le \delta_0\ \ (\tau\in T),\ Gx^{r}=h,
\end{equation*}
where $\delta_0:=\max\left(\max_{\tau\in T}\left(a(\tau)^{\top}x_0-b(\tau)\right),\gamma_0\right)$.
We prove only the second relation by mathematical induction. 
It holds for $r=0$ obviously. Next, suppose that it holds true for some $r>0$. Then, by noting $\max_{\tau\in T}\left(a(\tau)^{\top}(x^r+\Delta x^r)-b(\tau)\right)\le \gamma_r\le \gamma_0$ and $s_r\in (0,1]$, we have  
\begin{align*}
a(\tau)^{\top}x^{r+1}-b(\tau)&=a(\tau)^{\top}(x^r+s_r\Delta x^r)-b(\tau)\\
                                    &\le s_r\gamma_0+(1-s_r)(a(\tau)^{\top}x^r-b(\tau))\\
                                    &\le \max\left(a(\tau)^{\top}x^r-b(\tau),\gamma_0\right)\\
                                    &\le  \delta_0
\end{align*}
for any $\tau\in T$. Therefore, we ensure the targeted inequality holds for all $r\ge 0$.

Denote the feasible set of the SISDP\,\eqref{lsisdp} by $\mathcal{F}$ and 
define a proper closed convex function ${\pi}:\R^n\to \R$ by
$$
{\pi}(x):={\max}\left(-\lambda_{\min}(F(x)),\ \max_{\tau\in T}a(\tau)^{\top}x-b(\tau),\ \|Gx-h\|\right).
$$
Since the level set $\{x\in \R^n \mid {\pi}(x)\le 0\}(=\mathcal{F})$ is compact from the assumption that $\mathcal{F}$ is nonempty and compact, any level set $\{x\in \R^n \mid {\pi}(x)\le \eta\}$ with $\eta>0$ is also compact. Then, 
we can see that 
$\{x^r\}\subseteq \{x\in \R^n \mid {\pi}(x)\le \delta_0\}$ and thus $\{x^k\}$ is bounded.
\end{proof}

Now, we enter the essential part of the global convergence of the algorithm.

The following lemmas play key roles in establishing the well-definedness of the Armijo-like linesearch in Step~5.  
\begin{Lem}\label{lem:0819}
For any $x\in \R^n$ with $F(x)\in S^m_{++}$ and any $V\in S^m_{++}$, it holds that
\begin{equation*}
\psi(x,V)\ge m\mu (1 -\log \mu),
\end{equation*}
where the equality holds if and only if $F(x)V=\mu I$.
\end{Lem}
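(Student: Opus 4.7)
The plan is to diagonalize the product $F(x)V$ and reduce the claim to an elementary scalar convex inequality. Although $F(x)V$ is not in general symmetric, it is similar to the symmetric positive definite matrix $V^{1/2}F(x)V^{1/2}$ (since $V^{-1/2}(F(x)V)V^{1/2}=V^{1/2}F(x)V^{1/2}$), so its eigenvalues $\lambda_1,\ldots,\lambda_m$ are all real and strictly positive. Using
$$
F(x)\bullet V=\mathrm{Tr}(F(x)V)=\sum_{i=1}^m\lambda_i,\qquad \log\det F(x)V=\sum_{i=1}^m\log\lambda_i,
$$
I would rewrite
$$
\psi(x,V)=\sum_{i=1}^m\bigl(\lambda_i-\mu\log\lambda_i\bigr).
$$

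Next I would analyze the scalar function $h(\lambda):=\lambda-\mu\log\lambda$ on $(0,\infty)$. Since $h'(\lambda)=1-\mu/\lambda$ and $h''(\lambda)=\mu/\lambda^2>0$, $h$ is strictly convex with unique minimizer $\lambda=\mu$ and minimum value $h(\mu)=\mu(1-\log\mu)$. Summing the inequality $h(\lambda_i)\ge \mu(1-\log\mu)$ over $i=1,\ldots,m$ yields the desired bound $\psi(x,V)\ge m\mu(1-\log\mu)$.

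For the equality case, strict convexity of $h$ forces $\lambda_i=\mu$ for every $i$. Since $V^{1/2}F(x)V^{1/2}$ is symmetric with all eigenvalues equal to $\mu$, we get $V^{1/2}F(x)V^{1/2}=\mu I$; multiplying by $V^{-1/2}$ on both sides gives $F(x)V=\mu I$. Conversely, if $F(x)V=\mu I$ then $\lambda_i=\mu$ for all $i$ and equality holds trivially.

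There is no substantive obstacle here; the only point requiring a little care is the symmetry issue for $F(x)V$, which is handled by the similarity transformation used above to secure real positive eigenvalues and to recover the matrix equality $F(x)V=\mu I$ in the equality case.
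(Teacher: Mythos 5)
Your proof is correct and follows essentially the same route as the paper: pass to the eigenvalues $\lambda_i$ of $F(x)V$, write $\psi(x,V)=\sum_{i=1}^m(\lambda_i-\mu\log\lambda_i)$, and apply the scalar inequality $\lambda-\mu\log\lambda\ge\mu(1-\log\mu)$ with equality iff $\lambda=\mu$. Your additional care with the similarity $F(x)V\sim V^{1/2}F(x)V^{1/2}$ (to justify real positive eigenvalues and to recover $F(x)V=\mu I$ from all eigenvalues equalling $\mu$) fills in details the paper leaves implicit.
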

\begin{proof}
Denote the eigenvalues of $F(x)V$ by $\lambda_i>0\ (i=1,2,\ldots,m)$. 
Then, 
$\psi(x,V)={\rm Tr}(F(x)V)-\mu\log\det F(x)V=\sum_{i=1}^m(\lambda_i-\mu\log\lambda_i)\ge m\mu(1-\log\mu)$.
The equality holds if and only if $\lambda_1=\lambda_2=\cdots=\lambda_m=\mu$, that is, $F(x)V=\mu I$.
\end{proof}

\begin{Lem}\label{lem_bp}
\begin{enumerate}
\item\label{lem_bp1} 
It holds that 
\begin{equation}
\psi^{\prime}(x^{r},V_{r};\Delta x^{r},\Delta V_{r})\le 0.\label{eq:0326-1}
\end{equation}
In particular, the equality holds if and only if $F(x^r)V_r=\mu I$. \vspace{0.5em}\\
\item 
Let
$\theta(x):=\max_{\tau\in T}\left(g(x,\tau)\right)_+$
and $\theta^{\prime}(x;\Delta x)$ be
the directional derivative of $\theta$ at $x$ in the direction $\dx$.
Then, $\theta(\ru[x][r])+\theta^{\prime}(\ru[x][r];\ru[\dx][r])\le \rl[\gamma][r]$ holds.
\end{enumerate}
\end{Lem}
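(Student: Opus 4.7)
The plan for part~(i) is to exploit scaling invariance of the trace inner product together with the commutativity of $\tilde F(x^r)$ and $\tilde V_r$ guaranteed by Assumption~A-\ref{B2}. First I would use the identity $\tilde X\bullet\tilde Y = X\bullet Y$, valid for the scalings in \eqref{scal}, to recast formula \eqref{direct} entirely in scaled quantities,
\begin{equation*}
\psi'(x^r, V_r; \Delta x^r, \Delta V_r) = \sum_{i=1}^n \Delta x_i^r\bigl(\tilde F_i \bullet (\tilde V_r - \mu \tilde F(x^r)^{-1})\bigr) + (\tilde F(x^r) - \mu \tilde V_r^{-1}) \bullet \Delta \tilde V_r,
\end{equation*}
and then substitute $\Delta\tilde V_r$ from \eqref{eq:0326}. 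The crux is to verify that the terms linear in $\Delta x^r$ cancel, via the identity
\begin{equation*}
(\tilde F(x^r) - \mu \tilde V_r^{-1}) \bullet \mathcal{L}^{-1}_{\tilde F(x^r)}\mathcal{L}_{\tilde V_r} \tilde F_i = (\tilde V_r - \mu \tilde F(x^r)^{-1}) \bullet \tilde F_i,
\end{equation*}
which I would derive from self-adjointness of each operator $\mathcal{L}_A$ together with the direct computation $\mathcal{L}^{-1}_{\tilde F(x^r)}\mathcal{L}_{\tilde V_r}(\tilde F(x^r) - \mu \tilde V_r^{-1}) = \tilde V_r - \mu \tilde F(x^r)^{-1}$; the latter is where commutativity enters, since it allows me to collapse Jordan products to ordinary matrix products. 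After cancellation only $-(\tilde F(x^r) - \mu \tilde V_r^{-1}) \bullet (\tilde V_r - \mu \tilde F(x^r)^{-1})$ survives, and simultaneously diagonalising $\tilde F(x^r)$ and $\tilde V_r$ lets me rewrite this as $-\sum_{j=1}^m (\sqrt{f_j v_j} - \mu/\sqrt{f_j v_j})^2$, manifestly non-positive, vanishing iff $f_jv_j = \mu$ for every $j$, equivalently $F(x^r)V_r = \mu I$.

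For part~(ii) the plan is a standard Danskin-type case analysis. Writing $h(x) := \max_{\tau\in T} g(x,\tau)$ so that $\theta(x) = (h(x))_+$, and setting $T_{\rm act}(x^r) := \{\tau\in T : g(x^r,\tau) = h(x^r)\}$, I would first note that the last relation in \eqref{al:opt} together with $\gamma_r > 0$ implies
\begin{equation*}
g(x^r,\tau)+\nabla_x g(x^r,\tau)^{\top}\Delta x^r \le \gamma_r \quad \text{for every } \tau\in T.
\end{equation*}
Then I would split into three cases. If $h(x^r) > 0$ then $\theta(x^r) = h(x^r)$ and Danskin's theorem gives $\theta'(x^r;\Delta x^r) = \max_{\tau\in T_{\rm act}(x^r)}\nabla_x g(x^r,\tau)^{\top}\Delta x^r \le \gamma_r - h(x^r)$, yielding the claim. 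If $h(x^r) < 0$ then $\theta\equiv 0$ in a neighbourhood of $x^r$, so both $\theta(x^r)$ and $\theta'(x^r;\Delta x^r)$ vanish. If $h(x^r) = 0$ then $\theta(x^r) = 0$ and $\theta'(x^r;\Delta x^r) = \max\{0,\max_{\tau\in T_{\rm act}(x^r)}\nabla_x g(x^r,\tau)^{\top}\Delta x^r\} \le \gamma_r$. In every case $\theta(x^r)+\theta'(x^r;\Delta x^r)\le \gamma_r$.

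The main obstacle I expect is the algebraic manipulation in part~(i): the cancellation of the $\Delta x^r$-dependent terms is the heart of the argument, and it relies essentially on the HRVW/KSH/M or NT scaling producing commuting $\tilde F(x^r)$ and $\tilde V_r$; without this commutativity the adjoint identity above breaks. Part~(ii) is by comparison routine once the correct Danskin formula for the directional derivative of the pointwise maximum $\theta$ is recognised, and the bound from \eqref{al:opt} is applied to the active indices.
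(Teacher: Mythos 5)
Your proposal is correct and follows essentially the same route as the paper for both parts: part (i) rests on the scaling invariance of the trace, the scaled Newton equation, and the commutativity of $\tilde{F}(x^r)$ and $\tilde{V}_r$ supplied by the NT/HRVW-KSH-M scaling, while part (ii) is the same three-case Danskin analysis on the sign of $\max_{\tau\in T}g(x^r,\tau)$ using the relaxed feasibility bound in \eqref{al:opt}. The only cosmetic difference is at the end of part (i), where the paper recognizes the residual quantity as $-\bigl\|\mu\tilde{V}_r^{-\frac{1}{2}}\tilde{F}(x^r)^{-\frac{1}{2}}-\tilde{F}(x^r)^{\frac{1}{2}}\tilde{V}_r^{\frac{1}{2}}\bigr\|_F^2$ rather than simultaneously diagonalizing, and both yield the same equality characterization $F(x^r)V_r=\mu I$.
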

\begin{proof}
{\rm 1.} Although the proof can be given in a fashion similar to \cite[Lemma~3]{yabe}, we show it here for completeness. 
We first prove $\psi^{\prime}(\ru[x][r],\rl[V][r];\ru[\dx][r],\rl[\dV][r])\le 0$.
Note first that 
\begin{equation}
\mu{\rm Tr}(\tilde{F}(x^r)^{-1}\sum_{i=1}^n\Delta x_i^r\tilde{F}_i+\tilde{V}_r^{-1}\Delta\tilde{V}_r)={\rm Tr}(\mu^2\tilde{F}(x^r)^{-1}\tilde{V}_r^{-1}-\mu I),\label{eq:0601}
\end{equation}
which is implied by the scaled Newton equation $\tilde{F}(\ru[x][r])\circ\rl[\tilde{V}][r]+\sum_{i=1}^n\ru[\dx_i][r]\tilde{F}_i\circ\rl[\tilde{V}][r]+\tilde{F}(\ru[x][r])\circ\rl[\tilde{\dV}][r]=\mu I$ together with $\tilde{F}(x^r)\tilde{V}_r=\tilde{V_r}\tilde{F}(x^r)$.\\
Then, using $\tilde{F}(x^r)\tilde{V}_r=\tilde{V}_r\tilde{F}(x^r)$ again, 
we have from \eqref{direct} 
\begin{align}
&\hspace{1em}\psi^{\prime}(\ru[x][r],\rl[V][r];\ru[\dx][r],\rl[\dV][r])\notag \\
&={\rm Tr}\left(\sum_{i=1}^n\ru[\dx_i][r]F_i\rl[V][r]+F(\ru[x][r])\rl[\dV][r]-\mu F(\ru[x][r])^{-1}\sum_{i=1}^n\ru[\dx_i][r]F_i-\mu \rl[V][r]^{-1}\rl[\dV][r]\right)\notag \\
&={\rm Tr}\left(\sum_{i=1}^n\ru[\dx_i][r]\tilde{F}_i\rl[\tilde{V}][r]+\tilde{F}(\ru[x][r])\rl[\Delta\tilde{V}][r]-\mu \tilde{F}(\ru[x][r])^{-1}\sum_{i=1}^n\ru[\dx_i][r]\tilde{F}_i-\mu \rl[\tilde{V}][r]^{-1}\Delta \tilde{V}_r\right)\notag \\
&={\rm Tr}\left(\mu I-\tilde{F}(\ru[x][r])\rl[\tilde{V}][r]-\mu \tilde{F}(\ru[x][r])^{-1}\sum_{i=1}^n\ru[\dx_i][r]\tilde{F}_i-\mu \rl[\tilde{V}][r]^{-1}\rl[\Delta\tilde{V}][r]\right)\notag\\
&={\rm Tr}\left(2\mu I-\tilde{F}(\ru[x][r])\rl[\tilde{V}][r]-\mu^2\rl[\tilde{V}][r]^{-1}\tilde{F}(x^r)^{-1}\right)\notag\\
&=-\left\|\mu\rl[\tilde{V}][r]^{-\frac{1}{2}}\tilde{F}(\ru[x][r])^{-\frac{1}{2}}-\tilde{F}(\ru[x][r])^{\frac{1}{2}}\rl[\tilde{V}][r]^{\frac{1}{2}}\right\|_F^2\label{al:0819-direct}\\
&\le 0,
\end{align}
where the third equality holds because  
$$
{\rm Tr}\left(
\mu I -\tilde{F}(x^r)\tilde{V}_r
\right)
={\rm Tr}\left(
\sum_{i=1}^n\Delta x_i^r\tilde{F}_i\tilde{V}_r+\Delta\tilde{V}_r\tilde{F}(x^r)\right)
$$
from the scaled Newton equations, 
the fourth equality follows from \eqref{eq:0601}.
Thus, we get \eqref{eq:0326-1}.
By \eqref{al:0819-direct}, we observe that
$\psi^{\prime}(x,V;x^r,V_r)=0$ if and only if
$
\mu\rl[\tilde{V}][r]^{-\frac{1}{2}}\tilde{F}(\ru[x][r])^{-\frac{1}{2}}-\tilde{F}(\ru[x][r])^{\frac{1}{2}}\rl[\tilde{V}][r]^{\frac{1}{2}}=O,
$
whch is equivalent to $\tilde{F}(x^r)\tilde{V}_r=\mu I$, i.e., 
${F}(x^r){V}_r=\mu I$.
We therefore obtain the latter claim. \vspace{0.5em}\\
{\rm 2.} Let $T(x):={\rm argmax}_{\tau\in T}g(x,\tau)$.
Consider the three cases where the value of $\max_{\tau\in T}g(\ru[x][r],\tau)$ is (i) $<0$, (ii) $>0$, and (iii) {$=0$}:
\begin{enumerate}
\item[(i)] In this case, $\theta(\ru[x][r])=0$ and $\theta^{\prime}(\ru[x][r];\ru[\dx][r])=0$ holds.
Then $\theta(\ru[x][r])+\theta^{\prime}(\ru[x][r];\ru[\dx][r])=0<\rl[\gamma][r]$ readily follows. 
\item[(ii)] In this case, we have
$\theta(\ru[x][r])=\max_{\tau\in T(\ru[x][r])}g(\ru[x][r],\tau)>0$. 
We then get $\theta^{\prime}(\ru[x][r];\ru[\dx][r])=\max_{\tau\in T(\ru[x][r])}\nabla_xg(\ru[x][r],\tau)^{\top}\ru[\dx][r]$ and therefore
\begin{align*}
\theta(\ru[x][r])+\theta^{\prime}(\ru[x][r];\ru[\dx][r])
&=\max_{\tau\in T(\ru[x][r])}\left(\theta(x^r)+\nabla_xg(\ru[x][r],\tau)^{\top}\ru[\dx][r]\right)\\
&=\max_{\tau\in T(\ru[x][r])}
\left(g(\ru[x][r],\tau)+\nabla_xg(\ru[x][r],\tau)^{\top}\ru[\dx][r]\right)\le\rl[\gamma][r].
\end{align*}
\item[(iii)] In this case, we have $\theta(\ru[x][r])=g(x^r,\tau)=0$
for each $\tau\in T(x^r)$ and $\theta^{\prime}(\ru[x][r];\ru[\dx][r])=\max_{\tau\in T(\ru[x][r])}\left(\nabla_xg(\ru[x][r],\tau)^{\top}\ru[\dx][r]\right)_+$.
Then, 
$$
\theta(\ru[x][r])+\theta^{\prime}(\ru[x][r];\ru[\dx][r])=\theta^{\prime}(\ru[x][r];\ru[\dx][r])=\max_{\tau\in T(\ru[x][r])}\left(g(\ru[x][r],\tau)+\nabla_xg(\ru[x][r],\tau)^{\top}\ru[\dx][r]\right)_+\le\rl[\gamma][r].
$$
\end{enumerate}
We have obtained the desired conclusion.
\end{proof}
\begin{Lem}\label{lem:0820}
Suppose $\Delta x=0$. Then, ${\psi^{\prime}}(x,V;\Delta x,\Delta V)=0$ implies $\Delta V=O$.
\end{Lem}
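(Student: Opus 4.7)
The plan is to exploit the explicit formula \eqref{eq:dv} for $\Delta V$ and re-use the Frobenius-norm identity derived inside the proof of Lemma \ref{lem_bp}. First I note that setting $\Delta x = 0$ in \eqref{eq:dv} collapses the sum and immediately gives $\Delta V = \mu F(x)^{-1} - V$; in particular, in the regime of the lemma $\Delta V$ is completely determined by the current iterate $(x,V)$.

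Next I retrace the chain of equalities in the proof of Lemma \ref{lem_bp}, item \ref{lem_bp1}, specializing to $\Delta x = 0$. The terms involving $\sum_{i}\Delta x_i\tilde F_i$ vanish, but the remaining identity
\[
\psi'(x,V;0,\Delta V) = \operatorname{Tr}\bigl(2\mu I - \tilde F(x)\tilde V - \mu^2 \tilde V^{-1}\tilde F(x)^{-1}\bigr)
\]
still holds thanks to the scaled Newton equation \eqref{eq:scaled_new} with $\Delta x = 0$. Invoking Assumption~A-\ref{B2} (so that the HRVW/KSH/M or NT scaling forces $\tilde F(x)$ and $\tilde V$ to commute) the right-hand side collapses, as in \eqref{al:0819-direct}, to
\[
\psi'(x,V;0,\Delta V) = -\bigl\|\mu\,\tilde V^{-1/2}\tilde F(x)^{-1/2} - \tilde F(x)^{1/2}\tilde V^{1/2}\bigr\|_F^2.
\]

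Finally, the hypothesis $\psi'(x,V;0,\Delta V)=0$ forces $\mu\,\tilde V^{-1/2}\tilde F(x)^{-1/2} = \tilde F(x)^{1/2}\tilde V^{1/2}$. Squaring this matrix equation and using commutativity gives $\tilde F(x)\tilde V = \mu I$, and undoing the scaling \eqref{scal} yields $F(x)V = \mu I$, i.e.\ $V = \mu F(x)^{-1}$. Substituting this back into $\Delta V = \mu F(x)^{-1} - V$ produces $\Delta V = O$, as desired.

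The main (and essentially only) obstacle is that the squared-Frobenius-norm form above depends on the commutativity of $\tilde F(x)$ and $\tilde V$, so I must invoke Assumption~A-\ref{B2} explicitly; the remaining steps are a direct specialization of the earlier calculation combined with the closed-form expression \eqref{eq:dv}.
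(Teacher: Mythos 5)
Your proof is correct and follows essentially the same route as the paper: both arguments first deduce $F(x)V=\mu I$ from $\psi'=0$ via the equality case of Lemma~\ref{lem_bp} (which you re-derive rather than cite), and then conclude $\Delta V=O$ from the update formula specialized to $\Delta x=0$. The only cosmetic difference is that you use the closed form \eqref{eq:dv} (giving $\Delta V=\mu F(x)^{-1}-V$ directly), whereas the paper argues through $\mathcal{L}_{\tilde F(x)}\Delta\tilde V=O$ and the invertibility of $\mathcal{L}_{\tilde F(x)}$ — but \eqref{eq:dv} is precisely the inverted Newton equation, so these are the same step.
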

\begin{proof}
From Lemma\,\ref{lem_bp}\eqref{lem_bp1} and ${\psi^{\prime}}(x,V;\Delta x,\Delta V)=0$, 
we have $F(x)V=\mu I$, i.e., $\tilde{F}(x)\tilde{V}=\mu I$. 
This together with $\Delta x=0$ implies that the scaled Newton equations\,\eqref{eq:scaled_new}
yield $\tilde{F}(x)\circ \Delta \tilde{V}=O$, i.e., $\mathcal{L}_{\tilde{F}(x)}\Delta\tilde{V}=O$.
It then follows that $\Delta\tilde{V}=O$ since 
$\mathcal{L}_{\tilde{F}(x)}$ is invertible by $\tilde{F}(x)\in S^m_{++}$. Consequently, we have $\Delta V=O$.
\end{proof}
\begin{Prop}\label{Prop:0329}
For any sufficiently small $s\in (0,1]$, 
we have 
\begin{equation}
\Phi_{\rho_r}(\ru[x][r]+s\ru[\dx][r],\rl[V][r]+s\rl[\dV][r])\le \Phi_{\rho_r}(
\ru[x][r],\rl[V][r])-s\alpha (\Delta \ru[x][r])^{\top}B_{r}\Delta \ru[x][r]+\Delta\psi+
s\rho_r \rl[\gamma][r],
\end{equation}
where
$
\Delta\psi:=\psi^{\prime}(\ru[x][r],\rl[V][r];\Delta \ru[x][r],\Delta \rl[V][r]).
$
\end{Prop}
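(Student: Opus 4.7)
The strategy is to split the merit function as $\Phi_{\rho_r}=\chi_{\rho_r}+\nu\psi$ and control each summand by a first-order expansion at $(x^r,V_r)$. Since $f$, $\log\det F$, and $\psi$ are smooth in a neighborhood of $(x^r,V_r)$ (recall $F(x^r), V_r\in S^m_{++}$), Taylor's theorem yields
\begin{align*}
f(x^r+s\Delta x^r)-f(x^r) &= s\nabla f(x^r)^\top\Delta x^r + o(s),\\
-\mu[\log\det F(x^r+s\Delta x^r)-\log\det F(x^r)] &= -s\mu\xi(x^r)^\top\Delta x^r + o(s),\\
\nu[\psi(x^r+s\Delta x^r,V_r+s\Delta V_r)-\psi(x^r,V_r)] &= s\nu\Delta\psi + O(s^2).
\end{align*}
Moreover, Assumption~A-\ref{B5} combined with the equality $G(x^r+\Delta x^r)=h$ in \eqref{al:opt} inductively forces $Gx^r=h$ and $G\Delta x^r=0$, so the $\|Gx-h\|_1$-penalty contributes nothing to the increment.

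To bound the $\rho_r$-penalty-on-inequalities term, write $\theta(x):=\max_{\tau\in T}(g(x,\tau))_+$. The convexity of $(g(\cdot,\tau))_+$ in $x$ together with the uniform-in-$\tau$ first-order expansion of $g$---which relies on continuity of $\nabla_x g$ and compactness of $T$---and the last line of \eqref{al:opt} produces
\[
\theta(x^r+s\Delta x^r) \le (1-s)\theta(x^r) + s\gamma_r + o(s),
\]
hence $\rho_r[\theta(x^r+s\Delta x^r)-\theta(x^r)] \le -s\rho_r\theta(x^r) + s\rho_r\gamma_r + o(s)$.

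Next I would take the inner product of the first equation of \eqref{al:opt} with $\Delta x^r$, cancel the $G^\top z^{r+1}$-term via $G\Delta x^r=0$, and invoke the complementarity identity $\int_T[g(x^r,\tau)+\nabla_x g(x^r,\tau)^\top\Delta x^r]dy^{r+1}(\tau)=0$ to obtain
\[
[\nabla f(x^r)-\mu\xi(x^r)]^\top\Delta x^r = -(\Delta x^r)^\top B_r\Delta x^r + \int_T g(x^r,\tau)dy^{r+1}(\tau).
\]
The pointwise bound $g(x^r,\tau)\le\theta(x^r)$ together with $y^{r+1}\in\mathcal{M}_+(T)$ makes the integral at most $\theta(x^r)\|y^{r+1}\|$. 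Assembling the three Taylor expansions, substituting this identity, and using the penalty-parameter update $\rho_r>\|y^{r+1}\|$ from Step~4 to ensure $-s\theta(x^r)(\rho_r-\|y^{r+1}\|)\le 0$, we arrive at
\[
\Phi_{\rho_r}(x^r+s\Delta x^r,V_r+s\Delta V_r)-\Phi_{\rho_r}(x^r,V_r) \le -s(\Delta x^r)^\top B_r\Delta x^r + s\nu\Delta\psi + s\rho_r\gamma_r + o(s).
\]

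The last step is to convert the coefficient $-s$ in front of $(\Delta x^r)^\top B_r\Delta x^r$ into $-s\alpha$ by peeling off $-s(1-\alpha)(\Delta x^r)^\top B_r\Delta x^r$, and to absorb the residual $o(s)$ into this slack: when $\Delta x^r\ne 0$, positive definiteness of $B_r$ gives $(\Delta x^r)^\top B_r\Delta x^r>0$, so $o(s)\le s(1-\alpha)(\Delta x^r)^\top B_r\Delta x^r$ for all sufficiently small $s$; when $\Delta x^r=0$, the $\chi_{\rho_r}$-increment is identically zero, the $\psi$-residual is $O(s^2)$, and it is dominated by the strictly positive $s\rho_r\gamma_r$ for small $s$. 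The principal obstacle is handling the $\theta$-term: reconciling its non-smoothness with the $o(s)$ bookkeeping requires the combined use of pointwise convexity of $(g(\cdot,\tau))_+$, the uniform-in-$\tau$ Taylor expansion of $g$, and the penalty-update inequality $\rho_r>\|y^{r+1}\|$, which is precisely what allows the unwanted $\theta(x^r)\|y^{r+1}\|$-contribution from the KKT-inner-product step to cancel against the $-s\rho_r\theta(x^r)$ coming from the convexity bound on $\theta$.
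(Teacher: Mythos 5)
Your proposal is correct and follows essentially the same route as the paper's proof: expand $\Phi_{\rho}$ to first order, take the inner product of the stationarity equation in \eqref{al:opt} with $\Delta x^r$, use complementarity and $G\Delta x^r=0$ to trade $[\nabla f-\mu\xi]^{\top}\Delta x^r$ for $-(\Delta x^r)^{\top}B_r\Delta x^r+\int_T g\,dy^{r+1}$, bound the integral by $\theta(x^r)\|y^{r+1}\|$ and cancel it against the penalty term via $\rho>\|y^{r+1}\|$, and finally absorb the $o(s)$ remainder using $\alpha<1$. The only cosmetic difference is that the paper packages your inline bound $\theta(x^r+s\Delta x^r)\le(1-s)\theta(x^r)+s\gamma_r+o(s)$ as Lemma~\ref{lem_bp}(2), i.e.\ $\theta(x^r)+\theta^{\prime}(x^r;\Delta x^r)\le\gamma_r$, and absorbs the remainder using the strict negativity of $-(\Delta x^r)^{\top}B_r\Delta x^r+\nu\Delta\psi$ when $(\Delta x^r,\Delta V_r)\neq(0,O)$, whereas you split into the cases $\Delta x^r\neq 0$ and $\Delta x^r=0$; both are valid.
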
 
\begin{proof}
If $\left(\ru[\dx][r],\rl[\dV][r]\right)=(0,O)$, then $\psi^{\prime}(x^r,V_r;\Delta x^r,\Delta V_r)=0$ holds and the desired conclusion obviously holds for any $s>0$. So, we provide with the proof
by assuming $\left(\ru[\dx][r],\rl[\dV][r]\right)\neq (0,O)$.

For simplicity of expression, we abbreviate $\rho_r$, $x^{r}$, $\Delta x^{r}$, and $B_{r}$ as $\rho$, $x$, $\Delta x$, and $B$, respectively. 
In addition, we write ${\rm supp}(y)=\{\tau_1,\tau_2,\ldots,\tau_p\}$ and represent $y(\tau_i)$ as $y_i$ for each $i$.
Note that
$\|y\|=\sum_{i=1}^py_i$ by $y\in \mathcal{M}_+(T)$.

Let $f_{\rm bp}(x):=f(x)-\mu\log\det F(x)$ and 
$\theta(x):=\max_{\tau\in T}\left(g(x,\tau)\right)_+$.
Moreover, define $\Delta\psi:=\psi^{\prime}(x,V;\Delta x,\Delta V)$, $w:=(x,V)$, and $\Delta w:=(\dx,\dV)$.
We then have
{\begin{align}
\Phi_{\rho}(w+s\Delta w)-\Phi_{\rho}(w)&=s\nabla f_{\rm bp}(x)^{\top}\Delta x +s\nu\Delta\psi+
s\rho\theta^{\prime}(x;\dx)+o(s)\notag \\
&=-s\Delta x^{\top}B\Delta x
+
s\sum_{i=1}^pg(x,\tau_i)y_i
+s\rho\theta^{\prime}(x;\dx)+s\nu\Delta\psi+o(s)\notag \\
&\le -s\Delta x^{\top}B\Delta x+s\left(\sum_{i=1}^py_i\right)\theta(x)+
s\rho\theta^{\prime}(x;\dx)+s\nu\Delta\psi+o(s)\notag\\
&\le -s\Delta x^{\top}B\Delta x+s\rho(\theta(x)+\theta^{\prime}(x;\dx))+s\nu\Delta\psi+o(s)\notag\\
&\le -s\Delta x^{\top}B\Delta x+s\rho\gamma+s\nu\Delta\psi+o(s),\label{a3}
\end{align}}
where the second equality follows from the KKT conditions
\begin{align}
&B\Delta x+\nabla f_{\rm bp}(x)+\sum_{i=1}^p\nabla_xg(x,\tau_i)y_i+G^{\top}z=0,\ G\Delta x = 0,\notag\\
&0\ge g(x,\tau_i)+\nabla_xg(x,\tau_i)^{\top}\Delta x,\
y_i\ge 0,\ \left(g(x,\tau_i)+\nabla_xg(x,\tau_i)^{\top}\Delta x\right)y_i=0\ (i=1,2,\ldots,p),\notag 
\end{align}
and the first inequality follows from $y_i\ge 0$ for $i=1,2,\ldots,p$ and $g(x,\tau)\le \theta(x)\ (\tau\in T)$.
Moreover, the second inequality is obtained from $\rho>\|y\|$ and $\theta(x)\ge 0$, and the last inequality is due to Proposition\,\ref{Prop:0329}.
We hence obtain 
\begin{align*}
\Phi_{\rho}(w+s\Delta w)-\left(\Phi_{\rho}(w)+\rho s\gamma\right)&\le -s\Delta x^{\top}B\Delta x+s\nu\Delta\psi+o(s).
\end{align*}
Now, since we can show $-\Delta x^{\top}B\Delta x+\nu\Delta\psi<0$ whenever $(\dx,\dV)\neq (0,O)$, 
it holds that  
\begin{equation}
\Phi_{\rho}(w+s\Delta w)\le \Phi_{\rho}(w)+\alpha s\left(-\Delta x^{\top}B\Delta x+\nu\Delta\psi\right)+\rho s\gamma \notag 
\end{equation}
for all sufficiently small $s>0$.
The proof is complete.
\end{proof}
\begin{Prop}\label{penal}
Suppose that Assumption~A-\ref{B3} holds.
There exist some $\bar{\rho}>0$ and $\bar{r}>0$
such that $\rho_r=\bar{\rho}$ for any $r\ge \bar{r}$.
\end{Prop}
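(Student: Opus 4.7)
The plan is to exploit the two-case update rule for $\rho_r$ in Step~4 of Algorithm~1 and show that the sequence $\{\rho_r\}$ is monotonically nondecreasing with jumps bounded below by the fixed constant $\delta>0$. Combined with Assumption~A-\ref{B3}, this will force the number of strict increases to be finite.

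First, I would carefully inspect the update rule. In the case $\rho_r > \max(\|y^{r+1}\|, \|z^{r+1}\|_\infty)$, the algorithm keeps $\rho_{r+1}=\rho_r$. Otherwise, it sets
\[
\rho_{r+1} = \delta + \max(\|y^{r+1}\|, \|z^{r+1}\|_\infty) \ge \delta + \rho_r,
\]
where the inequality uses precisely that we are in the ``otherwise'' branch, so $\rho_r \le \max(\|y^{r+1}\|, \|z^{r+1}\|_\infty)$. Thus every update either leaves $\rho_r$ unchanged or increases it by at least $\delta$, so in particular $\{\rho_r\}$ is nondecreasing.

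Next, I would argue by contradiction: suppose the set $\mathcal{R}:=\{r\ge 0\mid \rho_{r+1}\neq \rho_r\}$ is infinite. Then enumerating $\mathcal{R}$ as $r_1<r_2<\cdots$, the monotonicity and the $\delta$-jump estimate give
\[
\rho_{r_j+1}\ge \rho_0 + j\delta\to \infty\qquad(j\to\infty),
\]
contradicting Assumption~A-\ref{B3} that $\{\rho_r\}$ is bounded. Hence $\mathcal{R}$ is finite; setting $\bar r := 1+\max\mathcal{R}$ and $\bar\rho := \rho_{\bar r}$ yields $\rho_r = \bar\rho$ for all $r\ge \bar r$, as required.

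There is no real obstacle here: the proof is a short monotonicity-plus-boundedness argument, and the only thing to verify cleanly is the inequality $\rho_{r+1}\ge \rho_r+\delta$ in the update branch, which follows directly from the ``otherwise'' condition in Step~4. The substance of Assumption~A-\ref{B3} is doing all of the work.
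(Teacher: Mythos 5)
Your proof is correct: the key observation that the ``otherwise'' branch forces $\rho_{r+1}\ge\rho_r+\delta$, combined with the boundedness of $\{\rho_r\}$ from Assumption~A-2, is exactly the standard argument the paper has in mind when it states the proof is ``easily obtained, and so omitted.'' Nothing is missing.
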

\begin{proof}
The proof is easily obtained, and so omitted.
\end{proof}
Let $\bar{r}>0$ and $\bar{\rho}>0$ be as given in the above proposition.
The following proposition shows that the produced sequences are bounded. 
\begin{Prop}\label{property_bound}
Suppose that Assumption~A holds.
Then, we have the following:
\begin{enumerate}
\item\label{prop:bound}
$\left\{\Phi_{{\rho}_r}(x^r,V_r)\right\}$ is bounded from above.
\item\label{rnum:infF} $\liminf_{r\to \infty}\det F(x^r)>0$ and $\liminf_{r\to \infty}\det V_r>0$.
\item\label{prop:0820-w} 
$\{\xi(\ru[x][r])\}$ with $\xi(\cdot)$ defined by \eqref{eq:wx} is bounded.
\item\label{bound_v} $\{(y^r,z^r,V_r)\}$ is bounded. 
\item\label{pr_bound} $\{\rl[P][r]\}$ and $\{\rl[P][r]^{-1}\}$ are bounded.
\end{enumerate}
\end{Prop}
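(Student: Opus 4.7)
The plan is to establish the five claims in the listed order, with Claim~\ref{prop:bound} serving as the workhorse for the rest. First I would invoke Proposition~\ref{penal} to fix $\rho_r=\bar\rho$ for all $r\ge \bar r$. Since Lemma~\ref{lem_bp}(\ref{lem_bp1}) gives $\psi^\prime(x^r,V_r;\Delta x^r,\Delta V_r)\le 0$ and $B_r\succ 0$ gives $(\Delta x^r)^\top B_r\Delta x^r\ge 0$, the quantity $\Delta\Phi_r$ in Step~5 is nonnegative. The Armijo inequality~\eqref{eq:armijo-rule}, combined with $s_r\le 1$ and $\gamma_r=\beta_2^r\gamma_0$, then yields
\[
\Phi_{\bar\rho}(x^{r+1},V_{r+1})\le \Phi_{\bar\rho}(x^r,V_r)+\bar\rho\,\gamma_0\beta_2^r;
\]
telescoping from $\bar r$ and using $\sum_r\beta_2^r<\infty$ produces a uniform upper bound. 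The finitely many initial indices $r<\bar r$ contribute only a bounded increment, since $\{x^r\}$ is bounded by Assumption~A-\ref{B4} and every $\Phi_{\rho_r}(x^r,V_r)$ is finite.

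Next, for Claim~\ref{rnum:infF}, I would decompose $\Phi_{\bar\rho}=f-\mu\log\det F+\bar\rho(\theta+\|G\cdot -h\|_1)+\nu\psi$. Nonnegativity of the penalty terms together with Lemma~\ref{lem:0819} gives
\[
-\mu\log\det F(x^r)\le \Phi_{\bar\rho}(x^r,V_r)-f(x^r)-\nu m\mu(1-\log\mu),
\]
whose right-hand side is bounded above by Claim~\ref{prop:bound} and the continuity of $f$; hence $\liminf\det F(x^r)>0$. Since $F$ is affine and $\{x^r\}$ is bounded, the spectrum of $F(x^r)$ lies in a compact subset $[c_1,c_2]\subset(0,\infty)$. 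Consequently $\chi_{\bar\rho}(x^r)$ is bounded below, so $\psi(x^r,V_r)$ is bounded above; expanding $\psi=F(x)\bullet V-\mu\log\det F(x)-\mu\log\det V$ and using $F(x^r)\succeq c_1 I$ reduces the problem to
\[
\sum_{i=1}^m\bigl(c_1\lambda_i(V_r)-\mu\log\lambda_i(V_r)\bigr)\le C.
\]
Since $t\mapsto c_1 t-\mu\log t$ is coercive on $(0,\infty)$ and each summand is bounded below by its minimum, every individual summand is bounded above, which forces each $\lambda_i(V_r)$ into a compact subset of $(0,\infty)$. This simultaneously delivers $\liminf\det V_r>0$ and boundedness of $\{V_r\}$.

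Claims~\ref{prop:0820-w}--\ref{pr_bound} will then be quick consequences. The spectral bounds on $F(x^r)$ yield that $\|F(x^r)^{-1}\|$ is bounded, so $\xi(x^r)=(F_i\bullet F(x^r)^{-1})_{i=1}^n$ is bounded. The spectral bounds on $V_r$ just derived give $\{V_r\}$ bounded, while the bounds on $\{y^r\}$ and $\{z^r\}$ follow directly from the Step~4 update, which always ensures $\rho_{r+1}>\max(\|y^{r+1}\|,\|z^{r+1}\|_\infty)$, combined with Assumption~A-\ref{B3}. Finally, under Assumption~A-\ref{B2} the matrix $P_r$ is either $F(x^r)^{-1/2}$ or the NT matrix $W^{-1/2}$; both are continuous functions of positive-definite matrices whose spectra lie in a fixed compact subset of $(0,\infty)$, so $\{P_r\}$ and $\{P_r^{-1}\}$ are bounded. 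The hard part I anticipate is the second half of Claim~\ref{rnum:infF}: extracting per-eigenvalue control of $V_r$ from the single scalar bound on $\psi(x^r,V_r)$ is only possible because the first half already supplies a uniform lower bound on the smallest eigenvalue of $F(x^r)$, which is what lets the trace term $F(x^r)\bullet V_r$ dominate $-\mu\log\det V_r$.
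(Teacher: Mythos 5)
Your proposal is correct, and for items \ref{prop:bound}, \ref{prop:0820-w}, the $(y^r,z^r)$ part of item \ref{bound_v}, and item \ref{pr_bound} it coincides with the paper's argument (telescoping the Armijo inequality over $r\ge\bar r$ using $\Delta\Phi_r\ge 0$ and $\sum_r\gamma_r<\infty$; reading the multiplier bounds off the Step~4 update and Assumption~A-\ref{B3}; deducing boundedness of $P_r,P_r^{-1}$ from the spectral control of $F(x^r)$ and $V_r$). Where you genuinely diverge is in item \ref{rnum:infF} and the $V_r$ part of item \ref{bound_v}. The paper runs two separate contradiction arguments: it gets $\liminf_r\det V_r>0$ from $F(x^r)\bullet V_r>0$ together with the boundedness of $-\mu\log\det F(x^r)$, and then, independently, gets boundedness of $\{V_r\}$ by introducing the congruence $X_r=F(x^r)^{-1/2}V_rF(x^r)^{-1/2}$ and showing $\sum_i(\lambda_i^r-\mu\log\lambda_i^r)\to\infty$ if a top eigenvalue escapes. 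You instead prove both facts at once, directly: from $F(x^r)\succeq c_1 I$ (which you correctly extract from $\det F(x^r)\ge\delta>0$ plus the upper spectral bound coming from boundedness of $\{x^r\}$) you bound $\psi(x^r,V_r)$ below by $\sum_i\bigl(c_1\lambda_i(V_r)-\mu\log\lambda_i(V_r)\bigr)$ up to a constant, and then use that each summand of the coercive function $t\mapsto c_1t-\mu\log t$ is bounded below by its minimum, so a bound on the sum forces every eigenvalue of $V_r$ into a compact subset of $(0,\infty)$. This is a slightly more unified and economical route; it buys you $\liminf_r\det V_r>0$ and boundedness of $\{V_r\}$ in one stroke and sidesteps the congruence transformation entirely, at the small cost of having to justify ${\rm Tr}(F(x^r)V_r)\ge c_1{\rm Tr}(V_r)$ (which holds since ${\rm Tr}((F(x^r)-c_1I)V_r)\ge 0$ for positive semidefinite factors). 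The paper's congruence argument is essentially the same coercivity idea applied to the eigenvalues of $X_r$ rather than of $V_r$. No gaps.
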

\begin{proof}
{\rm 1.} By the linesearch {rule}, 
we have, for $r\ge \bar{r}$,  
\begin{align*} 
\Phi_{{\bar{\rho}}}(x^r,V_r)&\le \Phi_{{\bar{\rho}}}(x^{r-1},V_{r-1})+\bar{\rho}\gamma_{r-1}\\
&= \Phi_{{\bar{\rho}}}(x^{\bar{r}},V_{\bar{r}})+\bar{\rho}\gamma_{\bar{r}}\sum_{i=\bar{r}}^r\beta_2^{i-\bar{r}}\\
&\le \Phi_{{\bar{\rho}}}(x^{\bar{r}},V_{\bar{r}})+\bar{\rho}\gamma_{\bar{r}}\sum_{i=\bar{r}}^{\infty}\beta_2^{i-\bar{r}}\\ 
&<\infty,
\end{align*}
where
the last inequality follows from $0<\beta_2<1$.
We thus get the boundedness of $\left\{\Phi_{\bar{\rho}}(x^r,V_r)\right\}$. \vspace{0.5em} \\
{\rm 2.} We first prove $\liminf_{r\to \infty}\det F(x^r)>0$. Assume to the contrary that
$\liminf_{r\to \infty}\det F(x^r)=0$. 
Notice that $\{\psi(x^r,V_r)\}$ is bounded from below by Lemma\,\ref{lem:0819}.
Also, notice that ${\chi}_{\bar{\rho}}(x^r)\to \infty$ as $r\to \infty$, where $\chi_{\bar{\rho}}(\cdot)$
is defined in \eqref{al:chi} with $\rho=\bar{\rho}$. This is because $\{x^r\}$ is bounded by assumption.
We then see that $\Phi_{\bar{\rho}}(x^r,V_r)\to \infty$ as $r\to \infty$, which contradicts item-\ref{prop:bound}. 
Hence we obtain $\liminf_{r\to \infty}\det F(x^r)>0$.

We next show $\liminf_{r\to \infty}\det V_r>0$.
For contradiction, 
we assume without loss of generality {that} $\lim_{r\to\infty}\det V_r=0$. Notice that $\{-\mu\log\det F(x^r)\}$ is bounded as
$\{x^r\}$ is bounded and $\liminf_{r\to \infty}\det F(x^r)>0$.
In addition, $F(x^r)\bullet V_r>0$ follows from $F(x^r),V_r\in S^m_{++}$. 
In view of these facts, we have {$\lim_{r\to\infty}\Phi_{\bar{\rho}}(x^r,V_r)=\infty$}. This contradicts item-\ref{prop:bound} again.   
Hence, we conclude $\liminf_{r\to \infty}\det V_r>0$. \vspace{0.5em}\\
%
{\rm 3.} Note that  $\liminf_{r\to \infty}\det F(x^r)>0$ by item-\ref{rnum:infF} and $\{F(x^r)\}\subseteq S^m_{++}$ 
is bounded since $\{x^r\}$ is bounded. Then, 
$\{F(x^r)^{-1}\}$ is also bounded and 
there exists some $M>0$ such that   
$\|F(x^r)^{-1}\|_F\le M$ for any $r\ge 0$. 
We then have  
\begin{equation*}
\|\xi(x^r)\|\le \sqrt{\sum_{i=1}^n\|F(x^r)^{-1}\|_F^2\|F_i\|_F^2}
           =\|F(x^r)^{-1}\|_F\sqrt{\sum_{i=1}^n\|F_i\|_F^2}\le M\sqrt{\sum_{i=1}^n\|F_i\|_F^2}
\end{equation*}
for all $r\ge 0$.
We thus obtain the desired result. \vspace{0.5em}\\
{\rm 4.} The boundedness of $\{\left(y^r,z^r\right)\}$ can be obtained from the boundedness of penalty parameters {$\rho_r$} (Assumption~A-\ref{B3}). We have only to show the boundedness of $\{V_r\}$.
For contradiction, suppose that $\{V_r\}$ is unbounded.
We may assume 
without loss of generality that
$\|V_r\|\to \infty$ as $r\to\infty$.
Let $X_r:=F(x^r)^{-\frac{1}{2}}V_rF(x^r)^{-\frac{1}{2}}$.
Denote 
the eigenvalues of $X_r\in S^m_{++}$ by $0<\lambda_1^r\le\lambda_2^r\le \cdots\le\lambda_m^r$.
Then, by the positive definiteness of $X_r$ and the boundedness of $F(x^r)^{-\frac{1}{2}}$
derived from item-\ref{rnum:infF}, it follows that $\lim_{r\to \infty}\lambda^r_m=\infty$.
We then obtain 
\begin{align}
\psi(x^r,V_r)&= {\rm Tr}(X_r)-\mu\log\det X_r\notag \\
                       & = \sum_{i=1}^m\left(\lambda_i^r-\mu\log\lambda_i^r\right)\rightarrow \infty \ (r\to \infty),\notag    
\end{align}
which together with the boundedness of $\{{\chi}_{\rho}(x^r)\}$ implies the unboundedness of $\{\Phi_{\rho}(x^r,V_r)\}$.
This contradicts item-\ref{prop:bound}.
We thus conclude that $\{V_r\}$ is bounded.\vspace{0.5em}\\
{\rm 5.} Since $\liminf_{r\to\infty}\det \rl[V][r]>0$ by item-\ref{rnum:infF}, we see that $\{\rl[V][r]^{-1}\}$ is bounded.
By Assumption~A-\ref{B2}, $\rl[P][r]$ is set to be $\rl[P][r]=F(\ru[x][r])^{-\frac{1}{2}}$ or $\rl[P][r]=W_r^{-\frac{1}{2}}$ with
$$
W_r=F(\ru[x][r])^{\frac{1}{2}}(F(\ru[x][r])^{\frac{1}{2}}\rl[V][r]F(\ru[x][r])^{\frac{1}{2}})^{-\frac{1}{2}}F(\ru[x][r])^{\frac{1}{2}}.
$$
Using these facts, it is not difficult to verify that $\{\rl[P][r]\}$ and $\{\rl[P][r]^{-1}\}$ are bounded. 
\end{proof}
Below, we additionally impose the following assumption on the sequence $\{B_r\}$: 
\begin{description}
\item[Assumption~B:]\label{B1}$\{B_{r}\}\subseteq S^n_{++}$ is a bounded sequence such that  
$$
\eta_2 I \succeq B_{r} \succeq \eta_1 I,\ \ r=0,1,2,\ldots
$$ 
for some $\eta_2>\eta_1>0$.
\end{description}
Assumption~B holds true if we choose the identity matrix as $B_r$ for all $r\ge 0$.
However, it is not obvious when the sequence $\{B_r\}$ suggested in Section~\ref{subsec:Br} becomes bounded.
In the next proposition, we prove that $\{B_r\}$ is bounded if $f$ is convex and Assumption~A holds.
\begin{Prop}
Suppose that $f$ and $g(\cdot,\tau)\ (\tau\in T)$ are twice continuously differentiable convex functions and Assumption~A holds.
Further, assume that the matrices $F_i\ (i=1,2,\ldots,n)$ are linearly independent in $S^m$.
Then, the sequence of matrices $B_r$ defined by either of the following formulas satisfies Assumption~B:
\begin{enumerate}
\item $B_r:=\nabla_{xx}^2L(x^r,y^r,z^r)$ for any $r\ge 0$,
\item  $B_r:=B(x^r,y^r,V_r)$ for any $r\ge 0$,
\end{enumerate} 
where $\nabla^2_{xx}L(x,y,z)$ and $B(x,y,V)$ are defined by \eqref{eq:hessian} and \eqref{eq:0913}, respectively. 
\end{Prop}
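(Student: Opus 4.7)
The plan is to verify the two-sided bound $\eta_1 I\preceq B_r\preceq \eta_2 I$ by combining Proposition~\ref{property_bound} with the convexity hypotheses and the linear independence of $\{F_i\}_{i=1}^n$. As a preliminary step I would upgrade items~\ref{rnum:infF} and~\ref{bound_v} to uniform lower bounds on the smallest eigenvalues of $F(x^r)$ and $V_r$: since these matrices are norm-bounded while their determinants stay bounded away from $0$, the elementary inequality $\lambda_{\min}(X)\ge \det X/\lambda_{\max}(X)^{m-1}$ valid for any $X\in S^m_{++}$ does the job. The same argument applied to the NT matrix $W_r$ (which is bounded with bounded inverse by item~\ref{pr_bound}) yields analogous bounds on $\lambda_{\min}(W_r)$.

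For the upper bound $B_r\preceq \eta_2 I$ I would proceed term by term. The Hessian $\nabla^2 f(x^r)$ is bounded by continuity and Assumption~A-\ref{B4}, and $\nabla^2_{xx}g(x^r,\tau)$ is uniformly bounded on $\{x^r\}\times T$ for the same reason; combined with the boundedness of $\|y^r\|$ from item~\ref{bound_v}, the piece $\nabla_{xx}^2 L_2(x^r,y^r)=\nabla^2 f(x^r)+\int_T\nabla_{xx}^2 g(x^r,\tau)\,dy^r(\tau)$ is bounded. The logdet Hessian entry $(\nabla^2\log\det F(x^r))_{ij}=-{\rm Tr}(F_i F(x^r)^{-1}F_j F(x^r)^{-1})$ is bounded since $F(x^r)^{-1}$ is, which settles case~1. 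Case~2 is handled identically, using the explicit HRVW/KSH/M and NT formulas for $({H}_{P_r}(x^r,V_r))_{ij}$ recorded at the end of Section~\ref{subsec:Br}, all of whose ingredients $F(x^r)^{-1}$, $V_r$ and $W_r^{-1}$ have just been shown uniformly bounded.

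For the positive definite lower bound I would split $B_r=N_r+H_r$, where $N_r:=\nabla_{xx}^2L_2(x^r,y^r)$ and $H_r$ is either $-\mu\nabla^2\log\det F(x^r)$ (case~1) or ${H}_{P_r}(x^r,V_r)$ (case~2). Convexity of $f$ and of $g(\cdot,\tau)$ together with $y^r\in\mathcal{M}_+(T)$ gives $N_r\succeq O$, so it suffices to produce $\eta_1>0$ with $H_r\succeq \eta_1 I$ for every $r$. Writing $F_v:=\sum_{i=1}^n v_i F_i$, a direct calculation identifies
\[
v^{\top}H_r v=\mu\bigl\|F(x^r)^{-1/2}F_v F(x^r)^{-1/2}\bigr\|_F^2
\]
in case~1, and analogously $\|F(x^r)^{-1/2}F_v V_r^{1/2}\|_F^2$ for HRVW/KSH/M and $\|W_r^{-1/2}F_v W_r^{-1/2}\|_F^2$ for NT in case~2. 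In all cases a standard Kronecker-type estimate $\|AXC\|_F\ge\lambda_{\min}(A)\lambda_{\min}(C)\|X\|_F$ for symmetric positive definite $A,C$, together with the eigenvalue bounds from the preliminary step, yields a uniform estimate $v^{\top}H_r v\ge \kappa\|F_v\|_F^2$ with some $\kappa>0$ independent of $r$.

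The genuinely essential step is to convert $\|F_v\|_F^2$ into $\|v\|^2$. Linear independence of $F_1,\dots,F_n$ means $v\mapsto\|F_v\|_F$ is continuous and strictly positive on the compact unit sphere of $\R^n$, hence attains a positive minimum $c>0$, giving $\|F_v\|_F^2\ge c^2\|v\|^2$ for every $v\in\R^n$. Combining this with the previous paragraph yields $H_r\succeq \eta_1 I$ with $\eta_1:=\kappa c^2>0$ independent of $r$, which together with $N_r\succeq O$ completes the lower bound. I anticipate that this coupling between linear independence of the $F_i$ and the uniform eigenvalue bounds on $F(x^r)$, $V_r$, $W_r$ is the only non-routine part of the argument; everything else amounts to threading together the items already provided by Proposition~\ref{property_bound}.
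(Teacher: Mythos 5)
Your proposal is correct and follows essentially the same route as the paper: decompose $B_r$ into the convexity part $M_r=\nabla^2_{xx}L_2(x^r,y^r)\succeq O$ plus the logdet/scaling part, and use the boundedness of $F(x^r)^{\pm1}$, $V_r^{\pm1}$, $P_r^{\pm1}$ from Proposition~\ref{property_bound} together with the linear independence of the $F_i$ to get uniform two-sided bounds on the quadratic form of that second part. You merely make explicit the quantitative steps (the $\lambda_{\min}\ge\det X/\lambda_{\max}^{m-1}$ bound, the $\|AXC\|_F$ estimate, and the compactness argument giving $\|F_v\|_F\ge c\|v\|$) that the paper asserts by simply claiming the existence of constants $c_1,\dots,c_4$.
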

\begin{proof}
Denote $M_r:=\nabla^2f(x^r)+\int_T\nabla^2_{xx}g(x^r,\tau)dy^r(\tau)$ for any $r\ge 0$.
Note that $M_r\in S^m_{+}$ follows for each $r\ge 0$ from the convexity of $f$ and $g(\cdot,\tau)\ (\tau\in T)$
together with $y^r\in\M_+(T)$, and moreover $\{M_r\}$ is bounded, since $\{(x^r,y^r)\}$ is bounded by Proposition\,\ref{property_bound}. \vspace{0.5em}\\
{\rm 1.} Let $H_r:=-\nabla^2\log\det F(x^r)$ and denote $d{F}:=\sum_{i=1}^nd_iF_i$ for $d\in \R^n$. 
Then, the $(i,j)$-th entry of $H_r$ is represented as
$
F_i\bullet F(x^r)^{-2}F_j
$
for $1\le i,j\le m$, and we have 
\begin{equation*}
d^{\top}H_rd=\sum_{1\le i,j\le m}d_iF_i\bullet F(x^r)^{-2}d_jF_j=dF\bullet F(x^r)^{-2}dF. 
\end{equation*}
By the boundedness of $\{F(x^r)^{-1}\}\subseteq S^m_{++}$ and $\{F(x^r)\}\subseteq S^m_{++}$ 
from Proposition\,\ref{property_bound}
together with the linear independence of $F_1,F_2,\ldots,F_n$,
there exist some $c_1,c_2>0$ such that 
$$
c_1\le d^{\top}H_rd\le c_2
$$
for any $r\ge 0$ and $d\in \R^n$ with $\|d\|=1$.
Therefore, $\{H_r\}$ is uniformly positive definite and bounded.
Since $\nabla^2_{xx}L(x^r,y^r,z^r)=M_r+\mu H_r$, the sequence $\left\{\nabla^2L(x^r,y^r,z^r)\right\}$ satisfies Assumption~B.\vspace{0.5em} \\
{\rm 2.} Let $d\tilde{F}:=\sum_{i=1}^nd_i\tilde{F}_i$ for $d\in \R^n$. 
Recall that ${H}_P(x,V)$ is defined by \eqref{eq:HP} for $P\in \R^{m\times m}$.
It then holds that 
\begin{align}
d^{\top}{H}_{P_r}(x^r,V_r)d
&=\sum_{1\le i,j\le m}d_i\tilde{F}_i\mathcal{L}^{-1}_{\tilde{F}(x^r)}\mathcal{L}_{\tilde{V_r}}\tilde{F}_jd_j\notag \\
&=d\tilde{F}\bullet \mathcal{L}^{-1}_{\tilde{F}(x^r)}\mathcal{L}_{\tilde{V}}d\tilde{F}\notag \\
&=\mathcal{L}^{-1}_{\tilde{F}(x^r)}d\tilde{F}\bullet \mathcal{L}_{\tilde{V}_r}\mathcal{L}_{\tilde{F}(x^r)}\mathcal{L}^{-1}_{\tilde{F}(x^r)}d\tilde{F}\notag \\
&=\mathcal{L}^{-1}_{\tilde{F}(x^r)}d\tilde{F}\bullet(\tilde{F}(x^r)\circ \tilde{V}_r)\mathcal{L}^{-1}_{\tilde{F}(x^r)}d\tilde{F}\label{al:0916},
\end{align}
where the third equality is due to the symmetry of the linear operator $\mathcal{L}_{\tilde{F}(x^r)}$ and the last one holds since $\tilde{F}(x^r)$ and $\tilde{V}_r$ commute.
When the scaling matrix $P_r$ is the NT matrix, 
we obtain 
\begin{equation}
\eqref{al:0916}
=d\tilde{F}\bullet \tilde{V}_rd\tilde{F}\label{eq:0916-1}
\end{equation}
from $\tilde{F}(x^r)=I$. 
On the other hand, when $P_r$ is the HRVW/KSH/M matrix, we get
\begin{equation}
\eqref{al:0916}
=
\mathcal{L}^{-1}_{\tilde{F}(x^r)}d\tilde{F}\bullet 
\tilde{F}(x^r)^2\mathcal{L}^{-1}_{\tilde{F}(x^r)}d\tilde{F}
\label{eq:0916-2}
\end{equation}
since $\tilde{F}(x^r)=\tilde{V}_r$.
By noting the boundedness of $\{P_r\}$ and $\{P_r^{-1}\}$ from Proposition\,\ref{property_bound},
the sequences 
$\{\tilde{F}(x^r)\}$, $\{\tilde{F}(x^r)^{-1}\}$, and $\{\tilde{V}_r\}$ are bounded.
Using these facts together with the linear independence of $F_1,F_2,\ldots,F_n$,
the above expressions \eqref{eq:0916-1} and \eqref{eq:0916-2} yield that there exist some $c_3,c_4>0$ such that 
$$
c_3\le d^{\top}{H}_{P_r}(x^r,V_r)d\le c_4
$$
for any $r\ge 0$ and $d\in \R^n$ with $\|d\|=1$.
Therefore, $\{{H}_{P_r}(x^r,V_r)\}$ is uniformly positive definite and bounded.
Since $B(x^r,y^r,V_r)=M_r+{H}_{P_r}(x^r,V_r)$, the sequence $\left\{B(x^r,y^r,V_r)\right\}$ satisfies Assumption~B.
\end{proof}

We next present the following proposition concerning $\{(\Delta x^r,\Delta V_r)\}$.
\begin{Prop}\label{prop:bd:dx}
Suppose that Assumptions~A and B hold.
Then, we have the following:
\begin{enumerate}
\item\label{wpx} $\{\Delta x^r\}$ is bounded;
\item $\{\Delta V_r\}$ is bounded.
\end{enumerate}
\end{Prop}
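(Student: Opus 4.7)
The plan is to read both bounds directly off the two defining relations for the search direction: the first equation of \eqref{al:opt} expresses $B_r\Delta x^r$ as a sum of quantities already known to be bounded, and \eqref{eq:dv} expresses $\Delta V_r$ as a sum whose every factor is bounded once item~\ref{wpx} is in hand. Assumption~B provides the missing invertibility bound for $B_r$, and Proposition~\ref{property_bound} takes care of all the other ingredients.

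For item~\ref{wpx}, I would rearrange the stationarity condition in \eqref{al:opt} with $B=B_r$, $y^+=y^{r+1}$, $z^+=z^{r+1}$ to get
\begin{equation*}
B_r\Delta x^r = -\nabla f(x^r)+\mu\xi(x^r)-\int_T\nabla_x g(x^r,\tau)dy^{r+1}(\tau)-G^{\top}z^{r+1}.
\end{equation*}
By Assumption~B, $B_r^{-1}$ exists with $\|B_r^{-1}\|\le 1/\eta_1$, so it suffices to bound the right-hand side uniformly in $r$. The term $\nabla f(x^r)$ is bounded by continuity of $\nabla f$ and Assumption~A-\ref{B4}; $\xi(x^r)$ is bounded by Proposition~\ref{property_bound}-\ref{prop:0820-w}; for the integral I would use
\begin{equation*}
\left\|\int_T\nabla_x g(x^r,\tau)dy^{r+1}(\tau)\right\|\le \|y^{r+1}\|\cdot\max_{\tau\in T}\|\nabla_x g(x^r,\tau)\|,
\end{equation*}
where the measure norm is bounded by Proposition~\ref{property_bound}-\ref{bound_v} and the supremum is bounded by joint continuity of $\nabla_x g$, compactness of $T$, and Assumption~A-\ref{B4}; finally $z^{r+1}$ is bounded by Proposition~\ref{property_bound}-\ref{bound_v}.

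For item~2, I would substitute the bound from item~1 into \eqref{eq:dv}:
\begin{equation*}
\Delta V_r=\mu F(x^r)^{-1}-V_r-\sum_{i=1}^n\Delta x_i^r\, P_r^{\top}\mathcal{L}_{\tilde{F}(x^r)}^{-1}\mathcal{L}_{\tilde{V}_r}\tilde{F}_i P_r.
\end{equation*}
The first two summands are bounded by items~\ref{rnum:infF} and \ref{bound_v} of Proposition~\ref{property_bound}. In the remaining sum, $\Delta x_i^r$ is controlled by item~\ref{wpx}, while $P_r$, $P_r^{-1}$ are bounded by Proposition~\ref{property_bound}-\ref{pr_bound}, which in turn yields uniform bounds on $\tilde{F}_i=P_rF_iP_r^{\top}$ and on $\tilde{V}_r=P_r^{-\top}V_rP_r^{-1}$, and therefore on the operator $\mathcal{L}_{\tilde{V}_r}$. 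The main obstacle, and the only nontrivial point, is a uniform bound on $\|\mathcal{L}_{\tilde{F}(x^r)}^{-1}\|$: this operator norm is controlled by $1/\lambda_{\min}(\tilde{F}(x^r))$, so I need $\lambda_{\min}(\tilde{F}(x^r))$ to stay away from $0$ along the iterates. This follows by combining $\liminf_{r\to\infty}\det F(x^r)>0$ from Proposition~\ref{property_bound}-\ref{rnum:infF}, the boundedness of $\{F(x^r)\}$, and the boundedness of $\{P_r\}$, which together ensure that the eigenvalues of $\tilde{F}(x^r)=P_rF(x^r)P_r^{\top}$ are uniformly bounded below by a positive constant. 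Once that estimate is in place, the triangle inequality applied to \eqref{eq:dv} yields the boundedness of $\{\Delta V_r\}$.
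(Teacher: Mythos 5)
Your proof is correct, but for item~\ref{wpx} it takes a genuinely different route from the paper. The paper does not invert the stationarity equation; instead it compares the SIQP objective value at $\Delta x^r$ with its value at the feasible displacement $x_{\rm f}-x^r$ (where $x_{\rm f}$ is any feasible point of \eqref{lsisdp}, which is SIQP-feasible by convexity of $g(\cdot,\tau)$), and then uses $B_r\succeq\eta_1 I$ to trap $\|\Delta x^r\|$ between a coercive quadratic and a constant. That argument needs only the boundedness of $\{x^r\}$, $\{\nabla f(x^r)\}$, $\{\xi(x^r)\}$ and the existence of a feasible point, but it does rely on $\Delta x^r$ being a genuine minimizer of the convex QP over the restricted constraint set, not merely a stationary point. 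Your argument instead reads $B_r\Delta x^r$ off the first equation of \eqref{al:opt} and pays for this directness by invoking the boundedness of the multipliers $\{y^{r+1}\}$ and $\{z^{r+1}\}$ from Proposition~\ref{property_bound}-\ref{bound_v} (ultimately Assumption~A-\ref{B3}) together with a uniform bound on $\max_{\tau\in T}\|\nabla_xg(x^r,\tau)\|$; both are available under the stated hypotheses, so the proof goes through, and it has the mild advantage of not needing optimality of $\Delta x^r$. For item~2 your argument is essentially the paper's (the paper works with the scaled quantities $\Delta\tilde V_r$ and then unscales, while you bound \eqref{eq:dv} directly, which is the same computation). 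One small precision: in your final step the uniform lower bound on $\lambda_{\min}(\tilde F(x^r))=\lambda_{\min}(P_rF(x^r)P_r^{\top})$ requires the boundedness of $\{P_r^{-1}\}$, not of $\{P_r\}$, since $v^{\top}P_rF(x^r)P_r^{\top}v\ge\lambda_{\min}(F(x^r))\,\|P_r^{\top}v\|^2\ge\lambda_{\min}(F(x^r))\,\|P_r^{-\top}\|^{-2}\|v\|^2$; you cite Proposition~\ref{property_bound}-\ref{pr_bound} earlier, which supplies exactly this, so this is a wording slip rather than a gap.
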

\begin{proof}
{\rm 1.} Let $x_{\rm f}\in \R^n$ be a feasible point for \eqref{lsisdp}, i.e., a point satisfying $Gx_{\rm f}=h$ and $g(x_{\rm f},\tau)\le 0\ (\tau\in T)$.
Then, 
according to Assumptions~A-\ref{B4}, B, and item-\ref{prop:0820-w}
of Proposition\,\ref{property_bound},
there exists some positive constant $M>0$ by which the following three sequences are bounded from above:
$\left\{\|\xi(x^r)\|\right\}$, $\left\{\|\nabla f(\ru[x][r])\|\right\}$, and
$$
\left\{\nabla f(x^r)^{\top}(x_{\rm f}-x^r)+\frac{1}{2}(x_{\rm f}-x^r)^{\top}B_r(x_{\rm f}-x^r)-\mu \xi(x^r)^{\top}(x_{\rm f}-x^r)\right\}.$$
Then, 
since $x_{\rm f}-x^r$ is feasible to SIQP\,\eqref{SIQP} with $x=x^r$ and $B=B_r$ for any $r\ge 0$ and 
$B_r\succeq \eta_1 I$ (Assumption~B), 
we have 
\begin{align*}
&-M\|\Delta x^r\|+\frac{\eta_1}{2}\|\Delta x^r\|^2-\mu M\|\Delta x^r\|\\
&\le \nabla f(\ru[x][r])^{\top}\Delta x^r+\frac{1}{2}(\Delta x^r)^{\top}B_r\Delta x^r-\mu \xi(x^r)^{\top}\Delta x^r\\
&\le \nabla f(\ru[x][r])^{\top}(x_{\rm f}-x^r)+\frac{1}{2}(x_{\rm f}-x^r)^{\top}B_r(x_{\rm f}-x^r)-\mu \xi(x^r)^{\top}(x_{\rm f}-x^r)\\
&\le M,
\end{align*}
from which it is easy to see the boundedness of $\{\|\Delta x^r\|\}$.\vspace{0.5em}\\
{\rm 2.} Recall that
$\{F(x^r)\}$, $\{V_r\}$, $\{P_r\}$, and the sequences of their inverse matrices are bounded
by the previous statements and assumptions.
Then, 
the scaled sequences
$\{\tilde{F}(x^r)\}$, 
$\{\tilde{V}_r\}$, 
$\{\tilde{F}(x^r)^{-1}\}$, and $\{\tilde{V}_r^{-1}\}$ are all bounded.
We thus find that the sequences of linear operators $\{\mathcal{L}_{\tilde{F}(x^r)}\}$ and $\{\mathcal{L}_{\tilde{F}(x^r)}^{-1}\}$ are bounded. 
By these observations and the Newton equations $\tilde{V}_r+\Delta \tilde{V}_r=\mu \tilde{F}(x^r)^{-1}
-\mathcal{L}_{\tilde{F}(x^r)}^{-1}\left(\sum_{i=1}^n\Delta x_i \tilde{F}_i\circ \tilde{V}_r\right)$ (see \eqref{eq:scaled_new}),
$\{\Delta \tilde{V}_r\}$ is bounded.
Then, by using 
$\Delta V_r=P_r\Delta \tilde{V}_rP_r^{\top}$
and the boundedness of $\{P_r\}$ again, 
we see that $\{\Delta {V}_r\}$ is bounded. 
\end{proof}}

\begin{Prop}\label{prop:conv}
Suppose that Assumptions~A and B hold.
Then, $\{\Phi_{\rl[\rho][r]}(\ru[x][r],\rl[V][r])\}$ is convergent.
\end{Prop}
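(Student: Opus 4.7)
The plan is to show that the merit function values form a sequence which, after a finite number of iterations, is monotone up to a summable perturbation, and is bounded below; both claims together force convergence.

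First, by Proposition~\ref{penal} there exists $\bar{r}$ with $\rho_r = \bar{\rho}$ for every $r \geq \bar{r}$, so it suffices to establish convergence of $\{\Phi_{\bar{\rho}}(x^r, V_r)\}_{r \geq \bar{r}}$. From the Armijo rule \eqref{eq:armijo-rule}, for such $r$ we have
\begin{equation*}
\Phi_{\bar{\rho}}(x^{r+1}, V_{r+1}) \leq \Phi_{\bar{\rho}}(x^r, V_r) - \alpha s_r \Delta\Phi_r + \bar{\rho} s_r \gamma_r.
\end{equation*}
I will argue that $\Delta\Phi_r \geq 0$: indeed, $(\Delta x^r)^{\top} B_r \Delta x^r \geq 0$ because $B_r \succ 0$ by Assumption~B, and $-\nu\psi^{\prime}(x^r,V_r;\Delta x^r,\Delta V_r) \geq 0$ by Lemma~\ref{lem_bp}\eqref{lem_bp1}. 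Therefore $\Phi_{\bar{\rho}}(x^{r+1}, V_{r+1}) - \Phi_{\bar{\rho}}(x^r, V_r) \leq \bar{\rho} s_r \gamma_r \leq \bar{\rho}\gamma_r$, and since $\gamma_r = \beta_2^{r-\bar{r}} \gamma_{\bar{r}}$ with $\beta_2 \in (0,1)$ by Step~6, the series $\sum_{r \geq \bar{r}} \gamma_r$ converges.

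Next, I need a lower bound for $\Phi_{\bar{\rho}}(x^r, V_r) = \chi_{\bar{\rho}}(x^r) + \nu\psi(x^r, V_r)$. The term $\nu\psi(x^r, V_r)$ is bounded below by $\nu m \mu (1-\log\mu)$ thanks to Lemma~\ref{lem:0819}. For $\chi_{\bar{\rho}}(x^r) = f(x^r) - \mu\log\det F(x^r) + \bar{\rho}\max_{\tau\in T}(g(x^r,\tau))_+ + \bar{\rho}\|Gx^r-h\|_1$, the last two terms are nonnegative; $f(x^r)$ is bounded on the bounded sequence $\{x^r\}$ (Assumption~A-\ref{B4}); and $-\mu\log\det F(x^r)$ is bounded since $\{F(x^r)\}$ is bounded and $\liminf_{r\to\infty}\det F(x^r) > 0$ by Proposition~\ref{property_bound}\eqref{rnum:infF}. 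Hence $\Phi_{\bar{\rho}}(x^r, V_r)$ is bounded below by some constant $C$.

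Finally, define
\begin{equation*}
b_r := \Phi_{\bar{\rho}}(x^r, V_r) + \bar{\rho} \sum_{i=r}^{\infty} \gamma_i \qquad (r \geq \bar{r}).
\end{equation*}
Then $b_{r+1} - b_r = \Phi_{\bar{\rho}}(x^{r+1}, V_{r+1}) - \Phi_{\bar{\rho}}(x^r, V_r) - \bar{\rho}\gamma_r \leq \bar{\rho} s_r \gamma_r - \bar{\rho}\gamma_r \leq 0$ because $s_r \in (0,1]$, and $b_r \geq C$ for every $r \geq \bar{r}$. Thus $\{b_r\}$ is nonincreasing and bounded below, so it converges. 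Since $\sum_{i=r}^{\infty} \gamma_i \to 0$, we conclude that $\{\Phi_{\bar{\rho}}(x^r, V_r)\}$ converges as well. The only subtle step is verifying the sign of $\Delta\Phi_r$ together with the lower-boundedness of $\chi_{\bar{\rho}}(x^r)$; both reduce directly to results already collected in Lemmas \ref{lem:0819}, \ref{lem_bp} and Proposition~\ref{property_bound}, so no further work is required.
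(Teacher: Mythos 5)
Your proof is correct and follows essentially the same route as the paper: fix $\rho_r=\bar{\rho}$ for $r\ge\bar{r}$ via Proposition~\ref{penal}, drop the $-\alpha s_r\Delta\Phi_r$ term from the Armijo inequality using $\Delta\Phi_r\ge 0$, invoke summability of $\{\gamma_r\}$, and combine with a lower bound on $\Phi_{\bar{\rho}}$. You merely spell out two steps the paper leaves implicit (the sign of $\Delta\Phi_r$ via Lemma~\ref{lem_bp}, and the monotone auxiliary sequence $b_r$), which is fine.
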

\begin{proof}
By the line search procedure, we have, for any $r\ge \bar{r}$, $\rho_r=\bar{\rho}$ and
\begin{equation}
\Phi_{{\bar{\rho}}}(\ru[x][r+1],\rl[V][r+1])\le \Phi_{\bar{\rho}}(\ru[x][r],\rl[V][r])+s\bar{\rho}\rl[\gamma][r].\notag  
\end{equation}
Notice that $\sum_{r=0}^{\infty}\rl[\gamma][r]=\gamma_0\sum_{r=0}^{\infty}\beta^r<\infty$.
Also, notice that $\{\Phi_{\bar{\rho}}(x^r,V_r)\}$ is bounded below, since $\left\{\left(x^r,V_r\right)\right\}$ is bounded by Assumption~A-\ref{B4}
and 
item\,\ref{bound_v} in Proposition\,\ref{property_bound}.
In view of these observations, $\{\Phi_{\bar{\rho}}(x^r,V_r)\}$ is a convergent sequence. 
\end{proof}

\begin{Prop}\label{prop:0719}
Suppose that Assumptions~A and B hold.
Then, $\Delta x^r\to 0$ and $\Delta V_r\to O$ as $r\to \infty$.
\end{Prop}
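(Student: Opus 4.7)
The plan is to argue by contradiction: if some subsequence of $\{(\Delta x^r,\Delta V_r)\}$ fails to converge to $(0,O)$, the Armijo line search must eventually fail, contradicting the convergence of $\{\Phi_{\bar{\rho}}(x^r,V_r)\}$ established in Proposition~\ref{prop:conv}.

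First I will sum the line-search inequality~\eqref{eq:armijo-rule} from $r=\bar{r}$ onwards. Using $\rho_{r+1}=\bar{\rho}$ for $r\ge\bar{r}$ (Proposition~\ref{penal}), $s_r\le 1$, $\sum_{r=0}^{\infty}\gamma_r=\gamma_0\sum_{r=0}^{\infty}\beta_2^{r}<\infty$, and the convergence of $\{\Phi_{\bar{\rho}}(x^r,V_r)\}$, the telescoping sum yields $\sum_{r\ge\bar{r}}\alpha s_r\Delta\Phi_r<\infty$. Since $B_r\succeq\eta_1 I$ (Assumption~B) and $\psi'(x^r,V_r;\Delta x^r,\Delta V_r)\le 0$ by Lemma~\ref{lem_bp}\eqref{lem_bp1}, every $\Delta\Phi_r$ is nonnegative, so $s_r\Delta\Phi_r\to 0$.

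Next, suppose for contradiction that a subsequence satisfies $(\Delta x^{r_k},\Delta V_{r_k})\to(\Delta x^{\ast},\Delta V^{\ast})\neq(0,O)$, and, after passing to a further subsequence, $x^{r_k}\to x^{\ast}$, $V_{r_k}\to V^{\ast}$, $B_{r_k}\to B^{\ast}$, $P_{r_k}\to P^{\ast}$ (these limits exist and satisfy $F(x^{\ast}),V^{\ast}\in S^m_{++}$ and $B^{\ast}\succeq\eta_1 I$ by Propositions~\ref{property_bound} and \ref{prop:bd:dx} together with Assumption~B). I will show the limit $\Delta\Phi^{\ast}$ of $\Delta\Phi_{r_k}$ is strictly positive: it is the sum of two nonnegative terms, and either $\Delta x^{\ast}\neq 0$, forcing $(\Delta x^{\ast})^{\top}B^{\ast}\Delta x^{\ast}\ge\eta_1\|\Delta x^{\ast}\|^{2}>0$, or $\Delta x^{\ast}=0$ but $\Delta V^{\ast}\neq O$, in which case Lemma~\ref{lem:0820} at the limit forces $\psi'(x^{\ast},V^{\ast};0,\Delta V^{\ast})<0$. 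Combined with $s_r\Delta\Phi_r\to 0$, this necessarily gives $s_{r_k}\to 0$.

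I then observe that the initial step size $\bar{s}_{r_k}$ defined by \eqref{eq:s0} is bounded below by a positive constant along this subsequence, because $\{F(x^r)^{-1}\}$, $\{V_r^{-1}\}$, $\{\Delta x^r\}$, $\{\Delta V_r\}$ are all bounded (Propositions~\ref{property_bound} and \ref{prop:bd:dx}), so the denominators $\lambda_{\min}(\cdot)$ in \eqref{eq:s0} are bounded. Hence $\ell_{r_k}\ge 1$ eventually, and the Armijo test fails at $t_k:=s_{r_k}/\beta_1\to 0^{+}$:
\begin{equation*}
\Phi_{\bar{\rho}}(x^{r_k}+t_k\Delta x^{r_k},V_{r_k}+t_k\Delta V_{r_k})>\Phi_{\bar{\rho}}(x^{r_k},V_{r_k})-\alpha t_k\Delta\Phi_{r_k}+t_k\bar{\rho}\gamma_{r_k}.
\end{equation*}
Dividing by $t_k$ and combining with the upper bound
\begin{equation*}
\Phi_{\bar{\rho}}(x^r+s\Delta x^r,V_r+s\Delta V_r)-\Phi_{\bar{\rho}}(x^r,V_r)\le -s\Delta\Phi_r+s\bar{\rho}\gamma_r+o(s)
\end{equation*}
extracted from the proof of Proposition~\ref{Prop:0329}, letting $k\to\infty$ gives $-\alpha\Delta\Phi^{\ast}\le -\Delta\Phi^{\ast}$, i.e., $(1-\alpha)\Delta\Phi^{\ast}\le 0$. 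Since $\alpha\in(0,1)$, this forces $\Delta\Phi^{\ast}=0$, contradicting $\Delta\Phi^{\ast}>0$.

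The chief technical obstacle is ensuring the $o(s)$ remainder in the Taylor-like expansion of $\Phi_{\bar{\rho}}$ is uniform along the subsequence despite the nonsmoothness contributed by $\theta(x)=\max_{\tau\in T}(g(x,\tau))_{+}$. This uniformity rests on the boundedness gathered in Propositions~\ref{property_bound} and \ref{prop:bd:dx}, the compactness of $T$, and the directional-derivative representation of $\theta$ used in Lemma~\ref{lem_bp}.
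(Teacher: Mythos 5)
Your proof is correct and follows essentially the same route as the paper's: telescope the Armijo inequality using Proposition~\ref{prop:conv} to get $s_r\Delta\Phi_r\to 0$ with $\Delta\Phi_r\ge 0$, force $s_{r_k}\to 0$ when $\Delta\Phi^{\ast}>0$, and play the failed Armijo test at $s_{r_k}/\beta_1$ against the expansion \eqref{a3} to obtain $(1-\alpha)\Delta\Phi^{\ast}\le 0$, finishing with Lemma~\ref{lem:0820}. Your explicit verification that $\bar{s}_{r_k}$ is bounded below (so that the test genuinely failed at the previous trial step) is a small but worthwhile refinement of a point the paper leaves implicit.
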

\begin{proof}
From Propositions\,\ref{property_bound}, \ref{prop:bd:dx}, and Assumption~A-\ref{B4},
$\left\{\left(x^r,V_r,\Delta x^r,\Delta V_r,B_r\right)\right\}$ is bounded and has at least one accumulation point, say $(x^{\ast},V_{\ast},\Delta x^{\ast},\Delta V_{\ast},B_{\ast})\in \R^n\times S^m_{++}\times \R^n\times S^m\times S^n_{++}$. 
Without loss of generality, we may suppose that 
$$
\lim_{r\to \infty}(x^r,V_r,\Delta x^r,\Delta V_r,B_r)=\left(x^{\ast},V_{\ast},\Delta x^{\ast},\Delta V_{\ast},B_{\ast}\right).
$$
Since $\{\Phi_{\bar{\rho}}(x^r,V_r)\}$ is convergent according to Proposition\,\ref{prop:conv},
\begin{equation}
s_r\left(-(\Delta x^r)^{\top}B_r\Delta x^r
+
\psi^{\prime}(x^r,V_r;\Delta x^r,\Delta V_r)\right)\to 0
\end{equation} 
as $r\to \infty$, from which together with $B_r\in S^n_{++}$ and 
$\psi^{\prime}(x^r,V_r;\Delta x^r,\Delta V_r)\le 0$
we have 
\begin{equation}
\lim_{r\to \infty}s_r(\Delta x^r)^{\top}B_r\Delta x^r=0,\ \lim_{r\to \infty}s_r\psi^{\prime}(x^r,V_r;\Delta x^r,\Delta V_r)=0\label{sono0}
\end{equation} 
 as $r\to \infty$.
If $\liminf_{r\to\infty}s_r>0$ holds, we can easily derive that $\Delta x^{\ast}=0$ and $\Delta V_{\ast}=O$, and the proof is complete.
Suppose $\liminf_{r\to\infty}s_r=0$. 
Without loss of generality, we may assume that 
$\lim_{r\to\infty}s_r=0$.
It follows
from the linesearch rule that 
\eqref{eq:armijo-rule} does not hold with $s=s_r/\beta$, namely,
\begin{align}
&\beta\left(\Phi_{\bar{\rho}}\left(x^r+\frac{s_r}{\beta}\Delta x^r,V_r+\frac{s_r}{\beta}\Delta V_r\right)-\Phi_{\bar{\rho}}\left(x^r,V_r\right)\right)\notag \\
>&-\alpha s_r(\Delta x^r)^{\top}B_r\Delta x^r+\nu {\alpha}s_r\psi^{\prime}(x^r,V_r;\Delta x^r,\Delta V_r)+
\bar{\rho}s_r\gamma_r\label{sono1}
\end{align}
for any $r$. 
Dividing both sides of \eqref{sono1} by $s_r>0$ and letting $r\to \infty$ yield
\begin{align}
\Phi^{\prime}_{\bar{\rho}}(x^{\ast},V_{\ast};\Delta x^{\ast},\Delta V_{\ast})
&=\chi^{\prime}_{\bar{\rho}}(x^{\ast};\Delta x^{\ast})+\nu\psi^{\prime}(x^{\ast},V_{\ast};\Delta x^{\ast},\Delta V_{\ast})\notag\\
&\ge\alpha\left(-(\Delta x^{\ast})^{\top}B_{\ast}\Delta x^{\ast}+\nu\psi^{\prime}(x^{\ast},V_{\ast};\Delta x^{\ast},\Delta V_{\ast})\right).\label{sono2}
\end{align}
On the other hand, it follows from \eqref{a3} that
\begin{align*}
&{\Phi_{\bar{\rho}}(x^r+s_r\Delta x^r,V_r+s_r\Delta V_r)-\Phi_{\bar{\rho}}(x^r,V_r)}\\
&\hspace{5em}\le -s_r(\Delta x^r)^{\top}B_{r}\Delta x^{r}+
\nu s_r\psi^{\prime}(x^r,V_r;\Delta x^r,\Delta V_r)
+s_r\bar{\rho}\gamma_r+o(s_r).
\end{align*}
Dividing both sides by $s_r$ and letting $r\to\infty$
yield
\begin{equation}
\Phi^{\prime}_{\bar{\rho}}(x^{\ast},V_{\ast};\Delta x^{\ast},\Delta V_{\ast})
\le -(\Delta x^{\ast})^{\top}B_{\ast}(\Delta x^{\ast})^{\top}+
\nu\psi^{\prime}(x^{\ast},V_{\ast};\Delta x^{\ast},\Delta V_{\ast}).\label{sono3}
\end{equation}
Combining \eqref{sono2} and \eqref{sono3}, we have 
\begin{equation}
\alpha\left(-(\Delta x^{\ast})^{\top}B_{\ast}\Delta x^{\ast}
+\nu\psi^{\prime}(x^{\ast},V_{\ast};\Delta x^{\ast},\Delta V_{\ast})
\right)\le -(\Delta x^{\ast})^{\top}B_{\ast}\Delta x^{\ast}+\nu\psi^{\prime}(x^{\ast},V_{\ast};\Delta x^{\ast},\Delta V_{\ast}).\label{eq:sono4}
\end{equation}
By the positive definiteness of $B_{\ast}$, we have $(\Delta x^{\ast})^{\top}B_{\ast}\Delta x^{\ast}\ge 0$.
From Lemma\,\ref{lem_bp},
we can deduce $\psi^{\prime}(x^{\ast},V_{\ast};\Delta x^{\ast},\Delta V_{\ast})\le 0$.
Hence, the relation\,\eqref{eq:sono4} together with $\alpha\in (0,1)$ yields that $\Delta x^{\ast}=0$ and $\psi^{\prime}(x^{\ast},V_{\ast};\Delta x^{\ast},\Delta V_{\ast})=0$. Moreover, we obtain $\Delta V_{\ast}=O$ from Lemma\,\ref{lem:0820}.
\end{proof}
Using the above propositions, we have the following convergence theorem.
\begin{Thm}
Suppose that Assumptions~A and B hold.
Then, the generated sequence
$\left\{\left(x^r,y^r,z^r,V_r\right)\right\}$ is bounded.
Furthermore, {any $\wast$-accumulation point $(x^{\ast},y^{\ast},z^{\ast},V_{\ast})$ of $\{(x^r,y^r,z^r,V_r)\}$} is a KKT point for the SIPLOG\,\eqref{lsisdp}.
\end{Thm}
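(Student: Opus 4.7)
The plan is to pass to the limit in the subproblem optimality system \eqref{al:opt} and in the inverse-scaled Newton formula \eqref{eq:dv}, using Proposition~\ref{prop:0719} to drive the search directions to zero.

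Boundedness of $\{(x^r,y^r,z^r,V_r)\}$ is immediate from Assumption~A-\ref{B4} together with Proposition~\ref{property_bound} item~\ref{bound_v}. Since $T$ is a compact metric space, $\mathcal{C}(T)$ is separable and closed balls in $\M(T)$ are sequentially weak*-compact, so $\wast$-accumulation points exist. Fix one, $(x^\ast,y^\ast,z^\ast,V_\ast)$, and extract a subsequence (still indexed by $r$) along which $x^r\to x^\ast$, $V_r\to V_\ast$, $z^r\to z^\ast$ in norm while $y^r\xrightarrow{w^\ast}y^\ast$. Note $y^\ast\in\M_+(T)$ since $\M_+(T)$ is weak*-closed, and Proposition~\ref{property_bound} item~\ref{rnum:infF} combined with $F(x^r),V_r\in S^m_{++}$ yields $F(x^\ast),V_\ast\in S^m_{++}$. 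Since $x^{r-1}=x^r-s_{r-1}\Delta x^{r-1}$ with $\Delta x^{r-1}\to 0$ (Proposition~\ref{prop:0719}) and $s_{r-1}\in(0,1]$, one also has $x^{r-1}\to x^\ast$ and $V_{r-1}\to V_\ast$ along the subsequence.

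I would then verify each of \eqref{e1}--\eqref{e4} in the limit. The equality constraint \eqref{e4} holds at every iteration by induction from $Gx^0=h$ (Assumption~A-\ref{B5}) and $G\Delta x^r=0$ (from \eqref{al:opt}). The centrality equation $F(x^\ast)\circ V_\ast=\mu I$ of \eqref{e2} comes from \eqref{eq:dv}: since $\Delta x^{r-1}\to 0$, $\Delta V_{r-1}\to O$, and $\{P_r\},\{P_r^{-1}\}$ are bounded (Proposition~\ref{property_bound} item~\ref{pr_bound}), the summation on the right-hand side of \eqref{eq:dv} vanishes, so $O=\mu F(x^\ast)^{-1}-V_\ast$. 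For \eqref{e1}, the first line of \eqref{al:opt}, written at iteration $r-1$, passes to the limit using continuity of $\nabla f$, continuity of $\xi$ on $\{x:F(x)\in S^m_{++}\}$, boundedness of $\{B_r\}$ (Assumption~B), $\Delta x^{r-1}\to 0$, and convergence of the integral term treated below. For \eqref{e3}, the bound $\max_{\tau}(g(x^{r-1},\tau)+\nabla_x g(x^{r-1},\tau)^\top\Delta x^{r-1})_+\le\gamma_{r-1}\to 0$ combined with $\Delta x^{r-1}\to 0$ gives $g(x^\ast,\tau)\le 0$ on $T$, while the complementarity $\int_T g(x^\ast,\tau)\,dy^\ast(\tau)=0$ follows from $\int_T g(x^{r-1},\tau)\,dy^r(\tau)=-\int_T \nabla_x g(x^{r-1},\tau)^\top\Delta x^{r-1}\,dy^r(\tau)$, whose right-hand side is bounded by $\|\Delta x^{r-1}\|\cdot\max_\tau\|\nabla_x g(x^{r-1},\tau)\|\cdot\|y^r\|\to 0$.

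The main technical point, which I expect to be the chief obstacle, is the convergence of integrals of the form $\int_T h(x^{r-1},\tau)\,dy^r(\tau)\to\int_T h(x^\ast,\tau)\,dy^\ast(\tau)$ for $h\in\{g,\nabla_x g\}$; weak* convergence cannot be invoked directly because the integrand depends on $r$. I would use the splitting
\begin{align*}
\int_T h(x^{r-1},\tau)\,dy^r(\tau)-\int_T h(x^\ast,\tau)\,dy^\ast(\tau)
&=\int_T\bigl[h(x^{r-1},\tau)-h(x^\ast,\tau)\bigr]\,dy^r(\tau)\\
&\quad+\int_T h(x^\ast,\tau)\,d(y^r-y^\ast)(\tau);
\end{align*}
the first term vanishes by uniform continuity of $h$ on a compact neighbourhood of $\{x^\ast\}\times T$ together with boundedness of $\|y^r\|$, and the second vanishes by the $\wast$-convergence since $h(x^\ast,\cdot)\in\mathcal{C}(T)$. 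Applied componentwise, this supplies the limits needed in \eqref{e1} and \eqref{e3}, completing the verification.
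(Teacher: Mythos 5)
Your proposal is correct and follows essentially the same route as the paper: boundedness from Assumption~A-\ref{B4} and Proposition~\ref{property_bound}, weak*-compactness to extract an accumulation point, Proposition~\ref{prop:0719} to kill the search directions, and then passage to the limit in the system \eqref{al:opt} together with the (scaled) Newton equations. The only differences are refinements: you supply the splitting argument for $\int_T h(x^{r-1},\tau)\,dy^r(\tau)\to\int_T h(x^{\ast},\tau)\,dy^{\ast}(\tau)$, which the paper asserts without proof in \eqref{eq:0719-1353}, and you obtain the centrality condition directly from \eqref{eq:dv} rather than by passing to the limit in the scaled Newton equation via an accumulation point $P_{\ast}$ of $\{P_r\}$.
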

{\begin{proof}
The first claim follows from Assumption~A\,\ref{B4} and item\,\ref{bound_v} of Proposition\,\ref{property_bound} immediately.

We show the second-half. 
Recall that any bounded sequence in $\M(T)$ has at least one weak* accumulation point
and one can extract a subsequence weakly* converging to that point. 
Hence, 
we may assume that the entire sequence $\{(x^r,y^r,z^r,V_r)\}$ weakly* converges to $(x^{\ast},y^{\ast},z^{\ast},V_{\ast})$ without loss of generality.                        
Since 
\begin{equation}
\Delta x^{r-1}\to 0,\ \Delta V_{r-1}\to O\label{eq:0719-1356}
\end{equation}
as $r\to \infty$ by Proposition\,\ref{prop:0719} and $s_{r-1}\in [0,1]$ for each $r$, 
we see that 
\begin{equation}
\lim_{r\to \infty}(x^{r-1},V_{r-1}) = \lim_{r\to \infty}
\left(x^{r}-s_{r-1}\Delta x^{r-1},V_{r-1}-s_{r-1}\Delta V_{r-1}\right) =(x^{\ast}, V_{\ast}),\label{eq:0719-1352}
\end{equation}
which together with ${\rm w}^{\ast}\mbox{-}\lim_{r\to \infty}y^r = y^{\ast}$ implies
\begin{equation}
\lim_{r\to \infty}
\int_T\left(g(x^{r-1},\tau)+\nabla_xg(x^{r-1},\tau)^{\top}\Delta x^{r-1}\right)dy^r(\tau)=
\int_Tg(x^{\ast},\tau)dy^{\ast}(\tau).\label{eq:0719-1353}
\end{equation}
Moreover, since $\{P_r\}$ and $\{P_r^{-1}\}$ are both bounded by item~\ref{pr_bound} of Proposition\,\ref{property_bound}, 
\eqref{eq:0719-1356} implies 
\begin{equation}
\lim_{r\to \infty}\Delta \tilde{V}_{r-1}=
\lim_{r\to \infty}
P_{r-1}^{\top}\Delta {V}_{r-1}P_{r-1}=O.\label{eq:0719_1455}
\end{equation}
In addition, there exists an accumulation point $P_{\ast}\in \R^{m\times m}$ of $\{P_r\}$.
Without loss of generality, we can suppose that $\lim_{r\to \infty}P_r = P_{\ast}$.
Then, it holds that 
\begin{equation}
\lim_{r\to \infty}\left(
\tilde{F}(x^r),\tilde{V}_r
\right)
=\left(P_{\ast}F(x^{\ast})P_{\ast}^{\top},
P_{\ast}^{-\top}V_{\ast}P_{\ast}^{-1}
\right).\label{eq:0719-1706}
\end{equation}

From \eqref{al:opt} and \eqref{eq:scaled_new}, we have, for each $r$, 
\begin{align}\label{al:opt2}
&\nabla f(x^{r-1})+B\Delta x^{r-1}-\mu \xi(x^{r-1})+\int_T\nabla_xg(x^{r-1},\tau)dy^r(\tau)+G^{\top}z^{r}=0,\notag \\
&g(x^{r-1},\tau)+\nabla_xg(x^{r-1},\tau)^{\top}\Delta x^{r-1}\le 0\ \ (\tau\in {\rm supp}(y^r)),\\
&\int_T\left(g(x^{r-1},\tau)+\nabla_xg(x^{r-1},\tau)^{\top}\Delta x^{r-1}\right)dy^r(\tau)=0,\ G(x^{r-1}+\Delta x^{r-1})=h,\notag \\
&\max_{\tau\in T}\left(
g(x^{r-1},\tau)+\nabla_xg(x^{r-1},\tau)^{\top}\Delta x^{r-1}
\right)_+\le \gamma_{r-1},\ y^{r}\in \mathcal{M}_+(T),\notag \\
&\left(\tilde{F}(x^{r-1})+\sum_{i=1}^n\Delta x_i^{r-1}\tilde{F}_i\right)\circ\tilde{V}_{r-1}+
\tilde{F}(x^{r-1})\circ\Delta \tilde{V}_{r-1}=\mu I,\ \tilde{F}(x^{r-1})\in S^m_{+}, \tilde{V}_{r-1}\in S^m_{+} \notag 
\end{align}
Note \eqref{eq:0719-1356}--\eqref{eq:0719-1706}, and $\gamma_{r-1}\to 0\ (r\to \infty)$.
Then, by letting $r\to \infty$ in \eqref{al:opt2}, we conclude that 
$(x^{\ast},y^{\ast},z^{\ast},V_{\ast})$ is a KKT point of the SIPLOG\,\eqref{lsisdp}.
\end{proof}}

\section{Numerical experiments}\label{sec:4}
In this section, we conduct some numerical experiments to demonstrate the efficiency of the interior point SQP-type algorithm (Algorithm~1). We consider two kinds of SIPLOGs with a one-dimensional index set $T$ of the form $[T_{\rm min},T_{\rm max}]$: The first one is a linear SIPLOG (LSIPLOG) where the functions $f$ and $g(\cdot,\tau)\ (\tau\in T)$ are affine; the second one is a nonlinear SIPLOG (NSIPLOG) with $f$ being a quartic objective function that is not convex in general.
 In this experiment, we compute KKT points of these problems for various values of $\mu$ and $m$. 
Throughout the section, 
{to evaluate the distance of $(x,y,z,V)$ to the set of KKT points of the SIPLOG, 
we use the function $R:\R^n\times \mathcal{M}(T)\times 
\R^s
\times S^m\to \R$ with the parameter $\mu>0$ 
defined by 
\begin{equation}
R(x,y,z,V):=
\sqrt{
\theta(x)^2+
\|\phi_1(x,{y},z,{V})\|^2+\phi_2(x,y)^2+
\|\phi_3(x,V)\|^2+\|Gx-h\|^2},\notag 
\end{equation}
where
$\theta(x):=\max_{\tau\in T}\,\left(g(x,\tau)\right)_+$,
$\phi_1(x,{y},{V}):=\nabla f(x)+{\displaystyle \int_T\nabla_xg(x,\tau)\ydt} -(F_i\bullet V)_{i=1}^n+G^{\top}z$,
$\phi_2(x,y):=\int_Tg(x,\tau)\ydt$, and $\phi_3(x,V):=\|F(x)\circ V-\mu I\|$.
Note that a point $(x,y,z,V)$ satisfying $R(x,y,z,V)=0$ with $F(x)\in S^m_{+}$ and $V\in S^m_+$ is a KKT point of the SIPLOG\,\eqref{lsisdp}.}
We identify a symmetric matrix variable $X\in S^m$ with a vector variable $x:=(x_{11},x_{12},\ldots,x_{1m},x_{12},x_{22},\ldots,x_{mm})^{\top}\in\R^{\frac{m(m+1)}{2}}$ through
$$
X=\begin{pmatrix}
             x_{11}&x_{12}&\ldots&x_{1m}\\
             x_{12}&x_{22}&\ldots&x_{2m}\\
           \vdots&\vdots&\ddots&\vdots\\
             x_{1m}&x_{2m}&\ldots&x_{mm}      
    \end{pmatrix}.
$$ 
The program is coded in MATLAB~R2012a and run on a machine with Intel(R) Xeon(R) CPU E5-1620 v3@3.50GHz and 10.24GB RAM. 
The actual implementation is as follows: 
To compute the values of the merit functions $R$ and $\Phi_{\rho}$,
we need to solve $\max_{\tau\in T}g({x},\tau)$.
For this purpose, we apply Newton's method%
\footnote{
There is no theoretical guarantee for the global optimality of $\tau$ gained in this way, but practically 
we may expect that such a $\tau$ is a global optimum.} combined with projection onto $T$
for the problem $\max_{\tau\in T}g({x},\tau)$ with a starting point ${\tau}\in \mathop{\rm argmax}\{g(x,s)\mid s=s_1,s_2,\ldots,s_{N+1}\}$,
where 
$$
s_i:=T_{\rm min} + \frac{(i-1)}{N}(T_{\rm max}-T_{\rm min})\ \ (i\in \{1,2,\ldots,N+1\}),\
N:=100.$$
In Step~0, we set
$$
\alpha=10^{-3},\ \beta_1=0.95,\ \beta_2=0.5,\ \sigma=0.95,\ \delta=1,\ \nu=1,\ \rl[\rho][0]=100,\ \rl[\gamma][0]=0.1.
$$ 
We choose starting points as $X^0 = m^{-1}I, y^0 = 0, z^0 = 0$, and $V_0=\mu I$.
In Step~1, we terminate the algorithm if the value of the function $R$ is less than $10^{-6}$,
In Step~2, 
we used the NT or HRVW/KSH/M (H.K.M) matrices as a scaling matrix $P_r$. 
As the matrix $B_r$, we set $B_r=\nabla^2_{xx}L_2(x^r,y^r)+H_{P_r}$, where the function $L_2$
is defined in \eqref{eq:0615-0916} and the matrix  
$H_{P_r}$ is defined by
\eqref{eq:HP} with $P=P_r$.
For the case of the NSIPLOG, we modified $B_P$ by lifting its negative eigenvalue to $1$ to assure $B_P\in S^m_{++}$.
In Step~3, we use the exchange method described in Section~\ref{sec:4} for finding a solution of the system\,\eqref{al:opt}.
We solve QPs by Matlab solver \texttt{fmincon} in Step~1 of the exchange method.
In Step~6, for the sake of numerical stability, we set $\gamma_{r+1}:=\max(10^{-8},\beta_2\gamma_{r})$.
\subsection{Linear SIPLOGs}
In this section, we compute a KKT point of the following LSIPLOG for various values of $\mu$.
This problem is obtained by slightly modifying 
the semi-infinite eigenvalue optimization problem solved in \cite{li2004solution}:
\begin{align}
\begin{array}{rcl}
\displaystyle{\mathop{\rm Maximize}_{X\in S^m}}& &A_0\bullet X-\mu\log\det X\\
\mbox{subject to}& & A(\tau)\bullet X\ge 0\ (\tau\in T)\\
            & & I\bullet X = 1\\ 
            & & X\in S^m_{++},
\end{array}\label{eig_semi}
\end{align}
where $A_0\in S^m$ and $A:T\to S^m$ is a symmetric matrix valued function 
whose elements are $q$-th order polynomials in $\tau$.  
We set $T=[0,1]$, i.e., $T_{\rm min}=0$ and $T_{\rm max}=1$, $(A(\tau))_{i,j}=\sum_{l=0}^qa_{i,j,l}\tau^l$ for $1\le i,j\le m$, and $q=9$.

We choose all entries of $A_0$ and the {coefficients} $a_{i,j,l}$ {in $A(\tau)$} from the interval $[-1,1]$ randomly. 
Among {those} generated data {sets}, we use only data such that the semi-infinite constraint
{includes at least one active constraint} 
at {an} optimum of \eqref{eig_semi}.
Specifically, for each data set, we compute an optimum, say $\tilde{X}$, of the SIPLOG obtained by removing the semi-infinite constraints.
If 
$\min_{1\le i\le 21}A\left(T_{\rm min} + \frac{i-1}{20}(T_{\rm max}-T_{\rm min})\right)\bullet \tilde{X}\le -10^{-3}$, which implies that $\tilde{X}$ does not satisfy the semi-infinite constraints, we adopt it as a valid data set.

In the above manner, we generated 10 test problems for each $(m,\mu)\in \{10,25\}\times \{1,10^{-5}\}$ and
applied the algorithm for solving the generated problems.
All instances were successfully solved. 
{We show the obtained results in Tables~\ref{ta1} and \ref{ta2}, where 
``time(s)'', ``$R^{\ast}$'', ``{$\sharp$QP}'', and ``$\sharp$ite'' {stand for}
the average running time in seconds, the average value of $R$ at the solution output by the algorithm, the average number of QPs solved per run, 
and the average number of iterations, respectively.
Moreover, ``H.K.M (resp. NT)''
{means} that the H.K.M (resp. NT) matrix is used as a scaling matrix $P_r$ in Step~2. 
We observe that $\sharp\mbox{ite}$ {tends} to increase as $m$ {increases}.
For example, Table\,\ref{ta1} {shows that}, when the H.K.M scaling matrix is used, 
$\sharp \mbox{ite}$ is 9.60 for $m=10$ while it is 13.40 for $m=25$. 
A similar tendency can be {observed} between the values of $\sharp\mbox{ite}$ and $\mu$.
Actually, for the case of $10(\mbox{H.K.M})$, we find that 
$\sharp\mbox{ite}$ {is} 9.6 for $\mu=1$
while it grows up to 33.2 for $\mu = 10^{-5}$. 
This phenomenon might be caused because a solution of the LSIPLOG 
approaches the boundary of the semi-definite region as $\mu$ decreases.
As the next observation, we see that $\frac{\sharp{\rm QP}}{\sharp{\rm ite}}$ lies between 1 and 2 in each row of Tables~\ref{ta1} and \ref{ta2}.
This indicates that, in the exchange method used in Step~2,  
a solution satisfying the conditions\,\eqref{al:opt} {were} found after solving only one or two QPs on average. 
{Finally, it may be worth mentioning that,
for many instances,
we observed superlinear-like convergence of 
the value of the function $R$ to 0
in a last stage of iterations.}
\begin{table}[h]
\centering
\small
\begin{tabular}{|c|c|c|c|c|}\hline
  $m$ (H.K.M./NT)                & time(s)&  $R^{\ast}$& {$\sharp$ QP}&$\sharp$ ite                        \\     \hline
10 (H.K.M.)                & 0.15        &  $6.35 \cdot 10^{-8}$  &11.7      &9.60   \\ 
25 (H.K.M.)                & 0.53       &  $ 1.77\cdot 10^{-7}$  & 15.9     & 13.40    \\ \hline 
10 (NT)                &     0.09         &   $ 8.40 \cdot 10^{-8}$      &13.0     &10.20      \\ 
25 (NT)                &     0.28         &    $1.76  \cdot 10^{-7}$     &17.8     & 14.00   \\ \hline
\end{tabular}
\caption{Results for the LSIPLOG with $\mu=1$}
\label{ta1}
\end{table}
\begin{table}
\centering
\small
\begin{tabular}{|c|c|c|c|c|}\hline
  $m$ (H.K.M./NT)                & time(s) &  $R^{\ast}$& {$\sharp$ QP}&$\sharp$ ite                        \\     \hline
10 (H.K.M.)                & 0.54        &  $1.44\cdot 10^{-7}$  &  45.9    &33.2   \\ 
25 (H.K.M.)                & 1.83         &  $7.88\cdot 10^{-8}$  &  42.4   & 37.9  \\ \hline 
10 (NT)                &     0.48          & $1.22\cdot 10^{-7}$    & 26.7    & 18.3     \\ 
25 (NT)                &      0.91         &   $1.62\cdot 10^{-7}$  &  23.4  & 19.1   \\ \hline
\end{tabular}
\caption{Results for the LSIPLOG with $\mu=10^{-5}$}
\label{ta2}
\end{table}

%
%
}
\subsection{Nonconvex SIPLOGs}
Next, we solve the following SIPLOG: 
\begin{align}
\begin{array}{rcl}
\displaystyle{\mathop{\rm Minimize}_{x\in \R^{\frac{m(m+1)}{2}}}}& &\frac{1}{2}x^{\top}Mx+c^{\top}x+\omega{\|x\|^4}-\mu\log\det (X+\kappa I )\\
\mbox{subject to}& & a(\tau)^{\top}x\le b(\tau)\ (\tau\in T)\\
                         & & X + \kappa I\in S^m_{++}
\end{array}\label{eig_semi}
\end{align}
where 
$\kappa >0$,
$a(\tau) : = (1,\tau,\tau^2,\tau^3,\ldots,\tau^{n-1})^{\top}\in \R^n$ with $n:=m(m+1)/2$ and
$b(\tau):=\sum_{i=1}^n\tau^{2i} + \sin(9\pi \tau)+2$.
All elements of $M\in S^n$ and $c\in \R^n$ are randomly generated from the interval $[-1,1]$. 
The objective function is not convex in general but coercive in the sense that 
$f(x)\to \infty$ as $\|x\|\to \infty$, and thus the problem is guaranteed to have at least one local optimum.
We set $T=[0,1]$ and $\kappa = \omega=0.01$, and generated 10 problems for each $(m,\mu)\in \{10, 25\}\times \{1,10^{-3}\}$.
We applied the algorithm for solving those problems. 

We show the obtained results in Tables~\ref{ta3} and \ref{ta4}, where each column means the same as in Tables~\ref{ta1} and \ref{ta2}.
Compared with the LSIPLOG, there are more 
significant differences between the results for $\mu=1$ and $\mu=10^{-3}$. 
Specifically, when using the H.K.M scaling matrix with $m=25$, 
$(\sharp {\rm ite},\sharp {\rm QP})$ is 
$(329.9,424.4)$ for $\mu =10^{-3}$, while 
$(\sharp {\rm ite},\sharp {\rm QP})$ is $(16.0,23.4)$ for $\mu =1$.
For the case where the NT scaling matrix is used, we also observe big differences between the results for
$\mu = 1$ and $\mu=10^{-3}$.
However, the NT scaling seems to exhibit more stable behavior than the H.K.M scaling.
In fact, when $m=25$, 
time(s) for the H.K.M changes drastically from $1.29$ to $28.4$, while time(s) for the NT increases from $1.19$ to $7.32$.
As for $\sharp {\rm ite}$ and $\sharp {\rm QP}$, a similar tendency is observed. 
\begin{table}[h]
\centering
\small
\begin{tabular}{|c|c|c|c|c|}\hline
  $m$ (H.K.M./NT)                & time(s) &  $R^{\ast}$& {$\sharp$ QP}&$\sharp$ ite                        \\     \hline
10 (H.K.M.)                &   0.22      &  $8.24\cdot 10^{-7}$  &16.3  &9.90  \\ 
25 (H.K.M.)                &    1.29   &  $2.02\cdot 10^{-7}$  & 23.4  &16.0    \\ \hline 
10 (NT)                &        0.19      & $9.28\cdot 10^{-7}$    &14.9  & 10.0    \\ 
25 (NT)                &         1.19      &   $4.04\cdot 10^{-7}$  & 24.3   & 13.2    \\ \hline
\end{tabular}
\caption{Results for the NSIPLOG with $\mu =1$}
\label{ta3}
\end{table}
\begin{table}
\centering
\small
\begin{tabular}{|c|c|c|c|c|}\hline
  $m$ (H.K.M./NT)                & time(s) &  $R^{\ast}$& {$\sharp$ QP}&$\sharp$ ite                        \\     \hline
10 (H.K.M.)                & 0.98        &  $1.78\cdot 10^{-7}$  &  71.8    &54.5   \\ 
25 (H.K.M.)                &  28.40         &  $4.69\cdot 10^{-7}$  & 422.4     & 329.9    \\ \hline 
10 (NT)                &     0.45           & $1.77\cdot 10^{-7}$    & 40.2    & 22.6     \\ 
25 (NT)                &      7.32          &   $3.48\cdot 10^{-7}$  & 162.8    &  75.5   \\ \hline
\end{tabular}
\caption{Results for the NSIPLOG with $\mu = 10^{-3}$}
\label{ta4}
\end{table}

\section{Conclusion}\label{sec:conclusion}
In this paper, we have proposed the interior point SQP-type method (Algorithm~1) for finding a KKT point for the SIPLOG\,\eqref{lsisdp}.
In this method, we generate a sequence of inexact KKT points of semi-infinite quadratic programs approximating the SIPLOG\,\eqref{lsisdp}. 
We further solve scaled Newton equations to generate a 
NT or HRVW/KSH/M search direction in the dual matrix space. 
We have shown that any weak* accumulation point of a produced sequence is a KKT point for the SIPLOG under some assumptions.
To examine the efficiency of the proposed algorithm,
we conducted some numerical experiments, in which we solve the SIPLOG\,\eqref{lsisdp} for various values of the barrier parameter $\mu$.
From the numerical results, we observed that the proposed algorithm performed well for finding a KKT point of the SIPLOG.
As a future work, we will develop a path-following method for solving the SISDP\,\eqref{lsisdp2} based on Algorithm~1.

\end{document}